\documentclass[12pt]{amsart}
\usepackage[english]{babel}
\usepackage{graphicx,amssymb,epsfig,amsmath}
\usepackage{hyperref}  
\newcounter{assump}
\usepackage{lipsum}
\usepackage{enumitem} 
\usepackage{hyperref}
\usepackage[all]{xy}
\usepackage{algorithm}
\usepackage{algpseudocode}
\usepackage{xcolor}

\usepackage{color}

\newtheorem{theorem}{Theorem}[section]
\newtheorem{theo}[theorem]{Theorem}

\newtheorem {coro}[theorem]{Corollary}
\newtheorem {pro}[theorem]{Proposition}
\newtheorem{defi}[theorem]{Definition}

\newtheorem{lem}[theorem]{Lemma}

\newdimen\AAdi%
\newbox\AAbo%
\def\AAk#1#2{\setbox\AAbo=\hbox{#2}\AAdi=\wd\AAbo\kern#1\AAdi{}}%
\def\AAr#1#2#3{\setbox\AAbo=\hbox{#2}\AAdi=\ht\AAbo\raise#1\AAdi\hbox{#3}}%
\RequirePackage[OT1]{fontenc}
\usepackage{hyperref}
\newdimen\AAdi%
\newbox\AAbo%
\def\AAk#1#2{\setbox\AAbo=\hbox{#2}\AAdi=\wd\AAbo\kern#1\AAdi{}}%
\def\AAr#1#2#3{\setbox\AAbo=\hbox{#2}\AAdi=\ht\AAbo\raise#1\AAdi\hbox{#3}}%
\def\bbx{{\mathbb X}}

\usepackage{graphicx} 

\title[Gumbel Convergence]{Gumbel Convergence for Maximal Packing Distances from Uniform Random Samples}

\author{Sana Louhichi} 
\address{Univ. Grenoble Alpes, CNRS, Grenoble INP, LJK 38000 Grenoble, France.
700 Avenue Centrale,
38401 Saint-Martin-d'H\`eres, France.} 
\email{sana.louhichi@univ-grenoble-alpes.fr} 
\date{}

\begin{document}

\begin{abstract}
Let $(X_n)_n$ be a sequence of i.i.d.\ $\mathbb{R}^d$-valued random variables uniformly distributed on a compact subset $\mathbb{M}$ of  \(\mathbb{R}^d\). 
In this work, we study the asymptotic behavior of maximal packings of this compact subset $\mathbb M$. Under some assumptions on $\mathbb{M}$ {\textcolor{black}{(in particular, that it is a smooth submanifold of $\mathbb{R}^d$ without boundary)}}, we establish a precise extreme value law (of Gumbel type) for the minimal distances from the sample $(X_1,\cdots,X_n)$ to the centers of maximal packings. This result leads to explicit asymptotic confidence bounds for this maximal packing and thus to the support $\mathbb M$.

A distinctive feature of our contribution is the \emph{explicit derivation} of the scaling sequences  in the Gumbel convergence, depending only on the geometry of the support $\mathbb M$. These formulas provide  interpretable confidence bounds for $\mathbb M$, which represent a novel complement to previous approaches such as those by Fasy et al.\ (2014) \cite{Fasy}.

Our approach bridges geometric and combinatorial probability arguments and relies on analytic tools such as the Lambert W function.
In addition, we provide {\textcolor{black}{examples of main sample spaces in directional and circular statistics (circle, sphere, torus)}}, along with simulations that illustrate and support the theoretical findings.

\end{abstract}

\maketitle

\medskip

\noindent {\small{\textbf{Keywords:} Gumbel distribution, extreme value theory, Hausdorff distance, packing numbers, stochastic geometry, random coverings, confidence regions, Lambert W function, inclusion-exclusion principle, Generalized Newton binomial formula.}}

\vspace{0.5cm} 

\noindent {\small {\bf{ 2020 Mathematics Subject Classification: }} {60D05, 60G70, 60F05, 62G05, 62G15, 52C17.} 

\section{Introduction}

Understanding the asymptotic behavior of distances between random points and deterministic sets is a fundamental problem in stochastic geometry and probability theory. Such questions naturally arise in various contexts including, for instance, statistical estimation with confidence sets \cite{Eddie,CR2004,CR2009,Devroye,Fasy,smale}, covering problems \cite{Penrose, Penrose2025b}, and rates of convergence for persistence diagrams \cite{Ch,Fasy}.

In this setting, the Hausdorff distance provides a natural way to quantify how well a finite sample approximates an underlying deterministic set. Another notion useful for studying this problem {\textcolor{black}{(and which will be the main focus of this paper)}}  is that of maximal packings of a compact set. We briefly recall these two notions.

Let $A$ and $B$ be two {closed} subsets of $\mathbb{R}^d$. The Hausdorff distance $d_H$ between $A$ and $B$ is defined by
\begin{equation}\label{hd}
d_H(A,B) 
= \max\left(\sup_{x\in A}\inf_{y\in B}\|x-y\|,\; \sup_{x\in B}\inf_{y\in A}\|x-y\|\right),
\end{equation}
where $\|\cdot\|$ denotes the Euclidean distance on $\mathbb R^d$.
Now let $\mathbb{K}$ be a compact subset of $\mathbb{R}^d$ and let $\varepsilon>0$.
A finite set $\{c_1,\dots,c_P \}\subset \mathbb{K}$ whose elements satisfy $\|c_i-c_j\|>2\varepsilon$ for all $1\le i\neq j\le P$ is called an $\varepsilon$-packing of $\mathbb{K}$. Equivalently, the balls $\bigl(B(c_i,\varepsilon)\bigr)_{1\le i\le P}$ are pairwise disjoint (the (closed) ball, $B(x,\varepsilon)$, for $x\in \mathbb R^d$ is defined by $\{y\in \mathbb R^d,\,\, \|y-x\|\leq \varepsilon\}$). 
An $\varepsilon$-packing of $\mathbb{K}$ is said to be maximal if no additional ball of radius $\varepsilon$, centered in $\mathbb{K}$, can be added without intersecting the others. 
The maximal $\varepsilon$-packing number of $\mathbb{K}$ is defined as
\begin{equation}\label{packingnumber}
P(\mathbb{K},\varepsilon)
= \sup\left\{ N\in\mathbb{N}\;\middle|\; \exists\, x_1,\dots,x_N\in\mathbb{K},\; \|x_i-x_j\|>2\varepsilon \;\; \forall i\neq j \right\}.
\end{equation}

In this work, we investigate the extreme behavior of the minimal distances between a random sample $X_1, \dots, X_n$ of i.i.d.\ random variables in $\mathbb{R}^d$ with compact support $\mathbb{M}$, and carefully chosen deterministic configurations associated with maximal packings of $\mathbb{M}$. In particular, maximal $\varepsilon$-packings of $\mathbb{M}$ play a central role in the study of the Hausdorff distance $d_H(\bbx_n, \mathbb{M})$, where $\bbx_n = \{X_1, \dots, X_n\}$ \cite{Fasy, smale}. {\textcolor{black}{A typical application arises when $X_1$ is uniformly distributed on a compact submanifold $\mathbb{M} \subset \mathbb{R}^d$}}.

Several authors have studied the almost sure convergence to $0$ of $d_H(\bbx_n, \mathbb{M})$ as $n$ tends to infinity under the assumption that the marginal distribution satisfies the $(a,b)$-standard assumption as introduced in Inequality (\ref{kappaepsilon}) (\cite{Ch, CR2004, CR2009, Fasy}). These studies are based on packing sets of the compact support $\mathbb{M}$, following results due to \cite{smale}, and concern both i.i.d.\ observations (\cite{Eddie, Ch, CR2004, CR2009,  Fasy}) and,  dependent ones (\cite{A, kalou, lou1, lou2}). 

A significant motivation for our study comes from the field of statistical inference for geometric and topological data analysis. In particular, \cite{Fasy} developed methods for constructing confidence sets around geometric objects, using distances such as the Hausdorff or bottleneck distance to quantify the discrepancy between data-derived estimates and the true underlying sets. They present several approaches to derive such confidence sets for the Hausdorff distance and, subsequently, for the bottleneck distance between suitable persistence diagrams.

To construct asymptotic confidence sets  using the Hausdorff distance, the almost sure convergence of $d_H(\bbx_n, \mathbb{M})$ is not enough. It is often necessary to derive the asymptotic distribution of the underlying statistics, suitably centered and normalized. Therefore, we are interested in the asymptotic distribution of the Hausdorff distance $d_H(\bbx_n, \mathbb{M})$,  suitably centered and scaled (recall that $\bbx_n=\{X_1,\dots, X_n\}$).

In \cite{Penrose}, the author considers a sequence $X_1, X_2, \ldots$ of independent random points uniformly distributed in a bounded domain $A \subset \mathbb{R}^d$ with smooth boundary. The \emph{coverage threshold} $R_n$ is defined as the smallest radius $r$ such that $A$ is completely covered by the balls of radius $r$ centered at $X_1, \ldots, X_n$. Among other results, limiting distributions for $R_n$ are derived, showing that, after suitable normalization, it converges to a Gumbel law. As noted in \cite{Penrose}, $R_n$ coincides with the Hausdorff distance between the sets $\bbx_n$ and $A$.

In \cite{Penrose2025b}, the authors further investigate the limiting behavior of $R_n$ and extend these results to the setting of a ${\mathcal C}^{2}$ Riemannian manifold with boundary. {\textcolor{black}{It is worth mentioning that their proof is based on the corresponding result for ${\mathcal C}^{2}$ Riemannian manifolds without boundary (see Proposition 2.11 in \cite{Penrose2025b} and the references therein).}} In this more general geometric framework, the limiting laws are again shown to be of Gumbel type.

The Gumbel distribution also appears in the study of the Hausdorff distance between a convex body and the random convex hull generated by interior random points (see \cite{BHB,Prochno}).

Finally, in \cite{Penrose2025a}, assuming that $\partial A$ is either smooth or polygonal, the author considers the random time $\tau_A$ required for $A$ to be fully covered by a spatial birth--growth process evolving in $A$. The paper establishes that the limiting distribution of this covering time is again of Gumbel type.

The objective of the present paper is closely related to that of \cite{Penrose} and \cite{Penrose2025b}. However, in contrast to these works, {\textcolor{black}{we focus on the asymptotic behavior of maximal packings,}} and we do not assume that the support $\mathbb{M}$ is a domain with a smooth boundary. Instead, we impose a different regularity condition on $\mathbb M$, formulated implicitly in Definition~\ref{order}. Roughly speaking, this condition requires that
\[
\mathbb{P}(X\in B(x,\varepsilon))
\]
behaves as $a\varepsilon^{b}$ for small positive $\varepsilon$ and some constants $a>0$ and $b>0$, with an error term of order $\varepsilon^{b+\alpha}$ for some $\alpha>0$ and for all $x\in \mathbb{M}$. 
Importantly, this approximation holds uniformly with respect to $x\in\mathbb{M}$; in particular, the leading constant $a$ does not depend on $x$.

Such a property does not hold in general for domains with boundary. Indeed, if $X$ is uniformly distributed on a compact domain $\mathbb{M} \subset \mathbb{R}^d$ with nonempty boundary (for instance a closed ball, a cube, or more generally a $C^2$ domain), then
\[
\mathbb{P}(X \in B(x,\varepsilon))
= \frac{\mathrm{Vol}(B(x,\varepsilon)\cap \mathbb{M})}{\mathrm{Vol}(\mathbb{M})}.
\]
For points $x$ in the interior of $\mathbb{M}$, the quantity
\[
\mathrm{Vol}(B(x,\varepsilon)\cap \mathbb{M})
\]
coincides with the volume of the full ball when $\varepsilon$ is sufficiently small. 
However, when $x$ is close to the boundary, only a fraction of the ball is contained in $\mathbb{M}$. 
Consequently, the leading constant in the expansion of $\mathbb{P}(X \in B(x,\varepsilon))$ depends on the location of $x$, which prevents the estimate from holding uniformly over $\mathbb{M}$.

The assumption required in Definition \ref{order} is, however, satisfied when $\mathbb{M}$ is a compact $C^2$ manifold without boundary embedded in $\mathbb{R}^d$, and $X$ is uniformly distributed on $\mathbb{M}$. 
In this case, the local geometry is homogeneous and the requirements of Definition \ref{order} hold uniformly for $x\in\mathbb{M}$. 
Typical examples include the sphere, the torus, and more generally any smooth compact submanifold of $\mathbb{R}^d$ without boundary.

In this paper, we provide precise asymptotic Gumbel laws for the minimal distances involved in suitable random packings of embedded compact submanifold without boundary. 
Our main result shows that, under suitable conditions on the underlying distribution and the packing number $P(\mathbb{M},\varepsilon)$, the suitably normalized quantity
\[
\max_{1\le j\le P_n}\min_{1\le i\le n}\|c_j-X_i\|,
\]
where $(c_j)_{1\le j\le P_n}$ are the centers of a maximal packing of $\mathbb{M}$, converges in distribution to a Gumbel law (see Theorem \ref{promain}).
Moreover, we deduce asymptotic bounds on the distribution of the Hausdorff distance between the random point cloud $\bbx_n$ and the compact set $\mathbb{M}$ itself (see Corollary \ref{maintheo}).

Note that when $\mathbb M$ is a compact $C^2$ submanifold (with or without boundary), studying the limiting distribution of the Hausdorff distance between a maximal packing of $\mathbb M$ and a sample of observations has several advantages. 
A maximal packing provides a discrete set of well-separated points that captures the intrinsic geometry of $\mathbb M$ uniformly. 
As a result, the Hausdorff distance to the packing is robust to random sampling fluctuations: adding or removing a few points from the sample has only a limited effect on the distance, unlike the direct estimation of the full support. 
Moreover, the packing naturally encodes the intrinsic dimension and metric structure of $\mathbb M$, so convergence results in Hausdorff distance reflect both the number of sample points and the geometric complexity of the manifold. 
Finally, the limiting distribution provides a rigorous quantitative tool for constructing confidence bands or quantifying uncertainty on geometric features of $\mathbb M$, in a way that is statistically tractable and stable.

The derivation of our results hinges on explicit sequences depending on the parameters of the distribution $a,b$ (appearing in the $(a,b)$-standard assumption, see Definition~\ref{defab} and Definition~\ref{order}) and on geometric characteristics of the set $\mathbb{M}$, related to its maximal packing number $P(\mathbb{M}, \varepsilon)$. 
A key technical tool in our analysis is based on approximating the {\textcolor{black}{inclusion-exclusion principle} using a generalized Newton binomial formula (see the proof of Proposition \ref{main} and of Lemma \ref{propak}).
Another important technical tool that enables the derivation of the scaling and centering constants is the analysis of the Lambert \( W \) function, which is defined as the inverse of the function \( x \mapsto x e^x \) on \( \mathbb{R}^+ \). This function allows for precise asymptotic inversion of expressions that arise in the computation of maximal packing numbers.

Some examples and numerical experiments are presented to illustrate the theoretical findings and to highlight the practical implications of our results.

The paper is organized as follows. In the next section, we introduce the notation and our assumptions, which are of two types. 
The first set of assumptions extends the \((a,b)\)-standard assumption by incorporating upper bounds and an asymptotic probability of uniform sampling near a point on a manifold (Definition~\ref{order}), while the second concerns the maximal $\varepsilon$-packing number of \(\mathbb{M}\), \(P(\mathbb{M}, \varepsilon)\) (see Assumption~\hyperref[assumpP]{${(\mathcal P)}$} below). 

Although upper and lower bounds for \(P(\mathbb{M}, \varepsilon)\) are available in the literature (see~\cite{smale}), what we require here is an asymptotic equivalence for \(\ln(P(\mathbb{M}, \varepsilon))\) as \(\varepsilon \to 0\). We establish such an asymptotic equivalence in Proposition~\ref{theopacking}. This proposition relies on a result of Karp and Pinsky (1989) (see Theorem~\ref{theoK} below), which is also useful for verifying the conditions of Definition~\ref{order}.

The main results are presented in Section~\ref{results}. Some examples with simulations demonstrate that our assumptions can be satisfied, and the simulations are consistent with the theoretical results of Section~\ref{results}. The examples and simulations are the focus of Section~\ref{examples}. All proofs are collected in Section~\ref{proofs}.

Throughout the paper, the notation $a_n = O(b_n)$ (resp.\ $a_n = o(b_n)$ or $a_n \sim b_n$) means that there exists a constant $C > 0$ such that $|a_n| \leq C |b_n|$ for large $n$ (resp.\ $\lim_{n \to \infty} a_n/b_n = 0$ or $\lim_{n \to \infty} a_n/b_n =1$). The notation $[\,\cdot\,]$ denotes the integer part, and \text{cst} refers to a positive constant that may vary from line to line.

\section{Notation and assumptions}
Let $\mathbb{M}$ be a compact subset of $\mathbb R^d$. Let $(X_i)_{1\leq i\leq n}$ be i.i.d. random variables supported  on $\mathbb{M}$ and distributed as a random variable $X$. Since $\bbx_n = \{X_1, \dots, X_n\} \subset \mathbb{M}$, the Hausdorff distance between  $\bbx_n$ and  $\mathbb{M}$, as defined in (\ref{hd}), can be written as
\[
d_H(\bbx_n, \mathbb{M}) = \sup_{x \in \mathbb{M}} \min_{1 \le i \le n} \|x - X_i\|,
\]
where $\|\cdot\|$ denotes the Euclidean distance on $\mathbb R^d$.
We first recall the $(a,b)$-standard assumption on the distribution of $X$ and its extension to the $(a',a,b)$-standard assumption. 
\begin{defi}\label{defab}
Let $X$ be a compactly supported $\mathbb{R}^d$-valued random variable with support $\mathbb{M}$. We say that $X$ satisfies the $(a,b)$-standard assumption if there exist constants $a > 0$, $b > 0$, and $\varepsilon_0 > 0$ such that for all $0 < \varepsilon \leq \varepsilon_0$,
\begin{equation}\label{kappaepsilon}
\inf_{x \in \mathbb{M}} \mathbb{P}(\|X - x\| \leq \varepsilon) \geq a \varepsilon^b.
\end{equation}
We say that $X$ satisfies the $(a',a,b)$-assumption if it satisfies the $(a,b)$-standard assumption and
\begin{equation}\label{ab}
\sup_{x \in \mathbb{M}} \mathbb{P}(\|X - x\| \leq \varepsilon) \leq a' \varepsilon^b,
\end{equation}
for some $a' > 0$ and all $0<\varepsilon \leq\varepsilon_0$.
\end{defi}
The $(a,b)$-standard assumption is well-known in the literature. It has been used, in both the i.i.d. and the dependent\ contexts, in set estimation problems under the Hausdorff distance (\cite{CR2004, CR2009, kalou, lou1}) and in the statistical analysis of persistence diagrams (\cite{Ch, Fasy}). The $(a',a,b)$-assumption was used in \cite{lou2}. 
The $(a',a,b)$-assumption allows to give results on any packing set of $\mathbb{M}$ which is not needed to be a maximal  packing set of $\mathbb{M}$ (see Proposition \ref{main}). The $(a',a,b)$-assumption also provides the following lower and upper bounds for the maximal $\varepsilon$-packing number (introduced in (\ref{packingnumber})), as shown in the following lemma.

\begin{lem}\label{lempacking}
Let $X$ be a random variable, compactly supported on $\mathbb{M}$, and satisfying the $(a',a,b)$-assumption. Then, for any positive $\varepsilon$ small enough,
\[
\frac{1}{a'(2\varepsilon)^b} \le P(\mathbb{M}, \varepsilon) \le  \frac{1}{a \varepsilon^b}.
\]
\end{lem}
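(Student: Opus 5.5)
The plan is to prove the two bounds separately by the standard volume arguments, both based on comparing probability masses of balls centered at the points of a packing.

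\emph{Upper bound.} Let $\{c_1,\dots,c_N\}\subset\mathbb{M}$ be an $\varepsilon$-packing, so $\|c_i-c_j\|>2\varepsilon$ for $i\neq j$. Then the closed balls $B(c_i,\varepsilon)$ are pairwise disjoint. Summing their masses under the law of $X$ gives
\[
1\ \ge\ \mathbb{P}\Bigl(X\in \bigcup_{i=1}^N B(c_i,\varepsilon)\Bigr)=\sum_{i=1}^N \mathbb{P}(\|X-c_i\|\le\varepsilon)\ \ge\ N a\varepsilon^b .
\]
The last inequality uses the $(a,b)$-standard assumption (\ref{kappaepsilon}) at the points $c_i\in\mathbb{M}$, valid for $\varepsilon\le\varepsilon_0$. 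Hence $N\le 1/(a\varepsilon^b)$, and taking the supremum over packings gives $P(\mathbb{M},\varepsilon)\le 1/(a\varepsilon^b)$. In particular $P(\mathbb{M},\varepsilon)$ is finite, so a packing attaining it exists.

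\emph{Lower bound.} Let $\{c_1,\dots,c_P\}$ be a packing with $P=P(\mathbb{M},\varepsilon)$ points. It is maximal in the sense of the introduction: for every $x\in\mathbb{M}$ there is some $i$ with $\|x-c_i\|\le 2\varepsilon$, since otherwise $x$ could be added. Thus $\mathbb{M}\subset\bigcup_i B(c_i,2\varepsilon)$. Because $X$ is supported on $\mathbb{M}$, subadditivity and (\ref{ab}) give
\[
1=\mathbb{P}(X\in\mathbb{M})\le\sum_{i=1}^P\mathbb{P}(\|X-c_i\|\le 2\varepsilon)\le P\,a'(2\varepsilon)^b ,
\]
provided $2\varepsilon\le\varepsilon_0$. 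This yields $P(\mathbb{M},\varepsilon)\ge 1/(a'(2\varepsilon)^b)$.

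The only points needing care are the following, and none is a real obstacle.
\begin{itemize}
\item The range of $\varepsilon$: we take $0<\varepsilon\le\varepsilon_0/2$, which is the "small enough" in the statement.
\item The strict inequality $>2\varepsilon$ in the packing definition. The complement of "can be added" is $\|x-c_i\|\le 2\varepsilon$ for some $i$, which matches the closed balls used above.
\item The fact that $P(\mathbb{M},\varepsilon)$ is finite, which the upper bound supplies before the lower bound is used.
\end{itemize}
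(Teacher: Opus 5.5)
Your proposal is correct and follows the paper's proof: disjointness of the balls $B(c_j,\varepsilon)$ together with the lower mass bound gives the upper bound, and the covering $\mathbb{M}\subset\bigcup_j B(c_j,2\varepsilon)$ by a maximum-cardinality packing, combined with subadditivity and (\ref{ab}), gives the lower bound. Your additional care (restricting to $\varepsilon\le\varepsilon_0/2$, noting that $P(\mathbb{M},\varepsilon)$ is finite, and deriving the covering directly from maximality instead of citing it) only makes explicit what the paper leaves implicit.
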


\noindent\textbf{Proof of Lemma \ref{lempacking}.}
The proof is immediate since, on the one hand, if $c_1,\ldots, c_p$ is an $\varepsilon$-packing of $\mathbb{M}$ (not necessarily maximal), then under the $(a,b)$-standard assumption,
\begin{equation}\label{P1}
p\, a\, \varepsilon^b 
\ \leq\ p \min_{1\leq i\leq p}\mathbb{P}\bigl(X \in B(c_j,\varepsilon)\bigr)\leq 
\sum_{j=1}^{p} \mathbb{P}\bigl(X \in B(c_j,\varepsilon)\bigr)
\ = \ 
\mathbb{P}\biggl(\bigcup_{j=1}^{p} B(c_j,\varepsilon)\biggr)
\ \leq \ 1,
\end{equation}
so that the upper bound is proved taking $p=P(\mathbb{M}, \varepsilon)$.
On the other hand, letting $p=P(\mathbb{M}, \varepsilon)$, since $c_1,\ldots, c_p$ is a maximal $\varepsilon$-packing of $\mathbb{M}$, then $\mathbb{M} \subset \bigcup_{j=1}^{P(\mathbb{M}, \varepsilon)} B(c_j, 2\varepsilon)$ (see Lemma 5.2 in \cite{smale}). Consequently,
\begin{eqnarray}\label{P2}
&& 1 
\ = \ 
\mathbb{P}\bigl(X \in \mathbb{M}\bigr)
\ \le \ 
\mathbb{P}\biggl(X \in \bigcup_{j=1}^{P(\mathbb{M}, \varepsilon)} B(c_j, 2\varepsilon)\biggr)
\ \le \ 
\sum_{j=1}^{P(\mathbb{M}, \varepsilon)} \mathbb{P}\bigl(X \in B(c_j, 2\varepsilon)\bigr) {\nonumber}\\
&& \le \ 
P(\mathbb{M}, \varepsilon) \max_{1\leq j\leq P(\mathbb{M}, \varepsilon)}\mathbb{P}\bigl(X \in B(c_j, 2\varepsilon)\bigr)
\ \le \ 
P(\mathbb{M}, \varepsilon)\, a'\, (2\varepsilon)^b.
\end{eqnarray}
This completes the proof of Lemma \ref{lempacking}.

\bigskip

To get our desired results, the $(a',a,b)$-assumption is not enough.
We need a more precise decomposition of $\mathbb{P}(X\in B(c_j,\varepsilon))$ when $\varepsilon$ tends $0$, for the packing set $(c_j)_j$. 
For this reason, we supposed for the main result that $X$ is uniformly distributed on $\mathbb{M}$ and $\mathbb{M}$ is sufficiently regular in order to satisfy the requirement of the definition below.
\begin{defi}\label{order}
Let $X$ be a compactly supported $\mathbb{R}^d$-valued random variable uniformly distributed on $\mathbb{M}$. We say that $X$ satisfies the $(a,b)$-standard assumption with rate $\alpha$ if
there exists $\varepsilon_0$ and  constants $\alpha>0$, $a>0$, $b>0$ and  $d>0$ such that for any $\varepsilon \in (0,\varepsilon_0)$ and any $x\in \mathbb{M}$, it holds
$$
 \left|\mathbb{P} (X\in B(x, \varepsilon)) - a \varepsilon^b \right| \le d \varepsilon^{b+\alpha}.
$$
\end{defi}
When $X$ is uniformly distributed, the evaluation of $\mathbb{P}(X \in B(x, \varepsilon))$ is related to the volume of small balls $B(x, \varepsilon) \cap \mathbb{M}$. 
The following theorem (see \cite{karp-pinsky}) provides an estimate of this volume, 
which, in turn, allows us to provide many examples of compact smooth submanifolds without boundary $\mathbb{M} \subset \mathbb{R}^d$,
for which the requirements of Definition \ref{order} are satisfied.

\begin{theo}[Karp-Pinsky, 1989] \label{theoK}
Let $M \subset \mathbb{R}^d$ be a smooth $p$-dimensional submanifold of $\mathbb R^d$ and let $x \in M$.  Then, as $\varepsilon \to 0$, the volume of the intersection of $M$ with the extrinsic Euclidean ball $B(x,\varepsilon)$ satisfies
{\textcolor{black}{
\[
\operatorname{Vol}\big(M \cap B(x,\varepsilon)\big)
=
\frac{\sigma_{p-1}}{p}\,\varepsilon^p
+
\frac{\sigma_{p-1}}{8p(p+2)}
\left(
2\|B_x\|^2
-
\|H_x\|^2
\right)
\varepsilon^{p+2}
+
O(\varepsilon^{p+3}),
\]
where}}

\begin{itemize} 
\item $\sigma_{p-1}$ denotes the surface measure of the unit sphere in
$\mathbb{R}^p$, that is \[
\sigma_{p-1} = \operatorname{Vol}_{p-1}(S^{p-1}) = \frac{2 \pi^{p/2}}{\Gamma(p/2)}
\]
\item $B_x$ is the second fundamental form of $M$ at $x$,
\item $H_x = \operatorname{trace}(B_x)$ is the mean curvature normal vector,
\item $\|B_x\|$ is the Hilbert--Schmidt norm of $B_x$, defined by
\[
\|B_x\|^2
=
\sum_{i,j=1}^p
\|B_x(e_i,e_j)\|^2,
\]

\item $\|H_x\|$ is the Euclidean norm of $H_x$,

\item and $\{e_i\}_{i=1}^p$ is any orthonormal basis of $T_x M$. 
\end{itemize} 
The remainder term $O(\varepsilon^{p+3})$ is uniform for $x$ in compact subsets
of $M$. 
\end{theo}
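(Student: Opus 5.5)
Theorem~\ref{theoK} is the Karp--Pinsky small-ball volume expansion. My plan is to prove it by a local graph parametrization of $M$ over its tangent space followed by a Taylor expansion.

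\textbf{Local graph representation.} Fix $x\in M$ and translate so that $x=0$. Identify $T_xM$ with $\mathbb{R}^p$ and its normal space with $\mathbb{R}^{d-p}$. Near $0$, $M$ is the graph $u\mapsto (u,f(u))$ with
\[
f(u)=\tfrac12 B_x(u,u)+C_x(u,u,u)+O(|u|^4),
\]
where $C_x$ is a cubic form. The constants in the $O(\cdot)$ depend only on finitely many derivatives of the chart, so they are uniform for $x$ in compact subsets of $M$. The Riemannian volume element in these coordinates is
\[
\sqrt{\det(I+Df^{T}Df)}\,du=\Bigl(1+\tfrac12\textstyle\sum_i\|B_x(u,e_i)\|^2+O(|u|^3)\Bigr)du.
\]
The odd-degree term coming from $C_x$ only appears at order $|u|^3$.

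\textbf{Describing the domain and integrating.} The ball condition is $|u|^2+|f(u)|^2\le\varepsilon^2$. Write $u=r\theta$ with $\theta\in S^{p-1}$. Then
\[
|f(u)|^2=\tfrac14 r^4\|B_x(\theta,\theta)\|^2+O(r^5),
\]
so the radial cutoff is
\[
r(\theta)=\varepsilon-\tfrac18\varepsilon^3\|B_x(\theta,\theta)\|^2+O(\varepsilon^4).
\]
The volume is therefore
\[
\int_{S^{p-1}}\int_0^{r(\theta)}\Bigl(1+\tfrac12 r^2\textstyle\sum_i\|B_x(\theta,e_i)\|^2+O(r^3)\Bigr)r^{p-1}\,dr\,d\theta .
\]
Expanding this gives three contributions:
\begin{itemize}
\item the leading term $\frac{\sigma_{p-1}}{p}\varepsilon^p$;
\item $-\frac{\varepsilon^{p+2}}{8}\int\|B_x(\theta,\theta)\|^2\,d\theta$ from the shrinking of the radial cutoff;
\item $\frac{\varepsilon^{p+2}}{2(p+2)}\int\sum_i\|B_x(\theta,e_i)\|^2\,d\theta$ from the metric correction;
\end{itemize}
plus an error of order $\varepsilon^{p+3}$.

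\textbf{Spherical averages.} The two angular integrals follow from the standard moment formulas
\[
\int_{S^{p-1}}\theta_i\theta_j\,d\theta=\frac{\sigma_{p-1}}{p}\delta_{ij},
\qquad
\int_{S^{p-1}}\theta_i\theta_j\theta_k\theta_l\,d\theta=\frac{\sigma_{p-1}}{p(p+2)}\bigl(\delta_{ij}\delta_{kl}+\delta_{ik}\delta_{jl}+\delta_{il}\delta_{jk}\bigr).
\]
These give
\[
\int\|B_x(\theta,\theta)\|^2\,d\theta=\frac{\sigma_{p-1}}{p(p+2)}\bigl(\|H_x\|^2+2\|B_x\|^2\bigr),
\qquad
\int\textstyle\sum_i\|B_x(\theta,e_i)\|^2\,d\theta=\frac{\sigma_{p-1}}{p}\|B_x\|^2 .
\]
Combining,
\[
\frac{\sigma_{p-1}}{p(p+2)}\Bigl[\tfrac12\|B_x\|^2-\tfrac18\bigl(\|H_x\|^2+2\|B_x\|^2\bigr)\Bigr]
=\frac{\sigma_{p-1}}{8p(p+2)}\bigl(2\|B_x\|^2-\|H_x\|^2\bigr),
\]
which is exactly the claimed coefficient of $\varepsilon^{p+2}$.

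\textbf{Main obstacle: uniformity.} The delicate step is controlling the error uniformly in $x$. This needs three ingredients:
\begin{enumerate}
\item A graph chart of a fixed radius $\rho$, valid at every point of a compact set $K\subset M$. This follows from the inverse function theorem together with compactness, using a uniform lower bound on the injectivity of the tangent projection.
\item For $\varepsilon<\rho$, the set $M\cap B(x,\varepsilon)$ lies entirely inside the chart. This rules out distant sheets of $M$ re-entering the ball; it holds because $K$ has positive distance from the part of $M$ lying outside the chart neighbourhoods, at least when $M$ is embedded or $K$ is small.
\item Uniform $C^4$ bounds on the chart functions $f$, which make all the $O(\cdot)$ terms above uniform in $x$.
\end{enumerate}

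The $O(\varepsilon^{p+3})$ remainder also needs one further check. The cubic term $C_x$ enters $|f|^2$ at order $r^5$ and is odd in $\theta$, and the same holds for its contribution to the volume element. Such terms either integrate to zero over the sphere or are already of order $\varepsilon^{p+3}$, so no spurious $\varepsilon^{p+2}$ term appears.
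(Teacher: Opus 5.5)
The paper gives no proof of Theorem~\ref{theoK}. It quotes the result from Karp and Pinsky (1989) and uses it as a black box, both to check Definition~\ref{order} in Section~\ref{examples} and in the proof of Proposition~\ref{theopacking}. Your proposal is therefore a genuinely different, self-contained route, and it is correct.

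The ingredients combine exactly as you say. The graph-chart density $1+\frac12\sum_i\|B_x(u,e_i)\|^2+O(|u|^3)$, the shrunken radial cutoff $r(\theta)=\varepsilon-\frac18\varepsilon^3\|B_x(\theta,\theta)\|^2+O(\varepsilon^4)$, and the second and fourth spherical moments give $\frac{\sigma_{p-1}}{8p(p+2)}\bigl(2\|B_x\|^2-\|H_x\|^2\bigr)$. This matches the circle ($2\varepsilon+\varepsilon^3/12$) and the sphere (exactly $\pi\varepsilon^2$).

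Two small remarks. First, the parity argument is not needed for the stated remainder. The $O(\varepsilon^4)$ error in $r(\theta)$ and the $O(r^3)$ error in the density each contribute only $O(\varepsilon^{p+3})$ after integration; parity is what would upgrade the remainder to $O(\varepsilon^{p+4})$. Second, in your uniformity step embeddedness is essential, and the hedge ``or $K$ is small'' should be dropped. For embedded $M$ the graph neighbourhoods are open sets $U\subset\mathbb{R}^d$ with $K\subset U$. Hence $\overline{M\setminus U}\cap K=\emptyset$, and the positive-distance claim follows. For an injectively immersed dense winding on a torus, by contrast, every extrinsic ball meets infinitely many arcs, whatever $K$ is, so no choice of small $K$ rescues the argument.

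The citation buys brevity. Your route buys an explicit account of where uniformity comes from: a uniform chart radius and uniform $C^4$ bounds. That is exactly what the paper relies on when it applies the expansion uniformly over the compact set $\mathbb{M}$ to obtain the constants of Definition~\ref{order}. It also shows that the extra hypothesis in Proposition~\ref{theopacking}, namely that $|2\|B_x\|^2-\|H_x\|^2|$ be uniformly bounded, is automatic for a compact smooth $\mathbb{M}$.
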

Thanks to this theorem, we provide examples in Section \ref{examples} for which the requirements of Definition \ref{order} are satisfied.

Our last assumption concerns the maximal $\varepsilon$-packing number $P(\mathbb{M}, \varepsilon)$, as defined in (\ref{packingnumber}), which corresponds to the maximal number of disjoint balls of radius $\varepsilon$ centered in $\mathbb{M}$. 
Unfortunately, upper and lower bounds for $P(\mathbb{M}, \varepsilon)$, as stated in Lemma \ref{lempacking}, are not sufficient for our purposes. 
We need an exact asymptotic order for $\ln(P(\mathbb{M}, \varepsilon))$ as $\varepsilon$ tends to $0$. We formulate this requirement in the following assumption.

\begin{enumerate}
\item[${(\mathcal P)}$]\refstepcounter{assump}\label{assumpP} 
Suppose that there exists a constant \( c(\mathbb{M}) \), depending on $\mathbb{M}$, such that
{\textcolor{black}{\[
\frac{\ln\left(P(\mathbb{M}, \varepsilon)\right)}{
\ln(c(\mathbb{M})\, \varepsilon^{-b}) + o(1)}\to 1
\quad \text{as } \varepsilon \to 0.
\]}}
\end{enumerate}
We give in Section \ref{examples} (see Proposition \ref{theopacking}) sufficient conditions on $\mathbb{M}$ under which Assumption~\hyperref[assumpP]{${(\mathcal P)}$} is satisfied.
We also provide practical examples.

\section{Results}\label{results}
{\textcolor{black}{
Before stating our main Theorem \ref{promain} and its corollary (Corollary \ref{maintheo}), 
we present three preliminary results: 
Lemma~\ref{propak}, Proposition~\ref{main}, 
and Corollary~\ref{main2}. 
The purpose of these results is to control
\[
\mathbb{P}\!\left(
\max_{1 \le j \le P_n}
\min_{1 \le i \le n}
\| X_i - c_j \|
\le u_n
\right),
\]
(see the statements for further details on the notation). 
From one result to the next, the assumptions are progressively strengthened.
\\
We begin with the following key lemma, which requires only the $(a',a,b)$-assumption. 
In particular, the i.i.d.\ random variables are not assumed to be uniformly distributed, 
and the considered packing set need not be maximal.
}}
\begin{lem}\label{propak}
Let $X_1, \ldots, X_n$ be i.i.d.\ random variables distributed as $X$, supported on the compact set $\mathbb{M}$. 
Suppose that the $(a',a,b)$-assumption is satisfied. Let $(u_n)_n$ be a sequence {\textcolor{black}{of strictly positive numbers}} tending to $0$
as $n \to \infty$, for which there exists a positive constant \(K\) such that, for any positive integer \(n\),
\begin{equation}\label{conditionun}
\left| a\, n\, u_n^b 
- \ln(n) 
+ \ln\bigl(\ln(n)\bigr) \right| 
\le K.
\end{equation}
Let $c_1,\ldots, c_{P_n}$ be a $u_n$-packing of $\mathbb{M}$ i.e., for any \(1 \le i \neq j \le P_n\),
\begin{equation}\label{intersectionvide}
c_i \in \mathbb{M},
\quad
B\bigl(c_i, u_n\bigr) \cap B\bigl(c_j, u_n\bigr) = \emptyset.
\end{equation}
Then,
\[
\lim_{n \to \infty} \left| \mathbb{P}\left( \max_{1 \le j \le P_n} \min_{1 \le i \le n} \|X_i - c_j\| \le u_n \right) 
- \prod_{j=1}^{P_n} \Bigl( 1 - \exp\bigl( -n \, \mathbb{P}\left(\|X_1 - c_j\| \le u_n\right) \bigr) \Bigr) \right| = 0.
\]
\end{lem}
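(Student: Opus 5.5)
Write $p_j=\mathbb{P}(\|X_1-c_j\|\le u_n)$ and $E_j=\{\min_{1\le i\le n}\|X_i-c_j\|>u_n\}$, the event that ball $j$ receives no sample point. The plan is to compare $\mathbb{P}(\bigcap_j E_j^c)$ first with the exact product $\prod_j(1-(1-p_j)^n)$, and then replace $(1-p_j)^n$ by $e^{-np_j}$. By the $(a',a,b)$-assumption, $a u_n^b\le p_j\le a'u_n^b$. Condition (\ref{conditionun}) then gives $np_j\ge \ln n-\ln\ln n-K$, hence $e^{-np_j}\le C\ln n/n$. It also gives $np_j^2\le a'^2 n u_n^{2b}=O((\ln n)^2/n)\to0$, so $(1-p_j)^n=e^{-np_j}(1+O(np_j^2))$ uniformly in $j$. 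By Lemma \ref{lempacking}-type counting (the packing bound (\ref{P1})), $P_n\le 1/(a u_n^b)=O(n/\ln n)$.

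\textbf{Step 1: inclusion--exclusion.} Write
\[
\mathbb{P}\Bigl(\bigcap_j E_j^c\Bigr)=\sum_{k=0}^{P_n}(-1)^k\sum_{|J|=k}\mathbb{P}\Bigl(\bigcap_{j\in J}E_j\Bigr).
\]
Because the balls are disjoint, $\mathbb{P}(\bigcap_{j\in J}E_j)=(1-\sum_{j\in J}p_j)^n$. The corresponding expansion of the product is
\[
\prod_j\bigl(1-(1-p_j)^n\bigr)=\sum_k(-1)^k\sum_{|J|=k}\prod_{j\in J}(1-p_j)^n .
\]
So the task is to control $\Delta_J=\prod_{J}(1-p_j)^n-(1-\sum_J p_j)^n\ge0$, summed over all $J$. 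Controlling this alternating sum is the main obstacle, since the terms come with signs. I would avoid alternation altogether by using the exact identity with nonnegative summands,
\[
\prod_j(1-q_j)-\mathbb{P}\Bigl(\bigcap E_j^c\Bigr)=\sum_J(-1)^{|J|}\Delta_J,
\]
combined with negative dependence. The occupancy indicators $\mathbf 1_{E_j}$ are negatively associated for disjoint cells (multinomial counts), which gives $\mathbb{P}(\bigcap E_j^c)\ge\prod_j(1-(1-p_j)^n)$ and hence one direction of the comparison.

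\textbf{Step 2: the other direction.} For the reverse inequality I would try a second-moment/Bonferroni truncation. Let $N=\sum_j\mathbf 1_{E_j}$. Then $\mathbb{E}N=\sum_j(1-p_j)^n\le P_n C\ln n/n=O(1)$. I would show that $N$ is asymptotically Poisson with mean $\lambda_n=\sum_j(1-p_j)^n$, using the factorial moments
\[
\mathbb{E}\binom{N}{k}=\sum_{|J|=k}\Bigl(1-\sum_J p_j\Bigr)^n .
\]
The generalized Newton binomial bound $(1-\sum_J p_j)^n\ge\prod_J(1-p_j)^n\,(1-n k^2\max_j p_j^2)$ holds for each fixed $k$, where the correction $nk^2(a'u_n^b)^2\to0$. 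Summing over $J$, $\mathbb{E}\binom Nk=\lambda_n^k/k!+o(1)$ uniformly on bounded $\lambda_n$ (the diagonal terms are negligible because $\max_j(1-p_j)^n\to0$). Truncated Bonferroni inequalities at even and odd $k$, with $k$ fixed and then sent to infinity, then give $\mathbb{P}(N=0)-e^{-\lambda_n}\to0$.

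\textbf{Step 3: conclusion.} The same Poissonization applied to the product gives $\prod_j(1-(1-p_j)^n)=\exp(-\lambda_n+O(\sum_j(1-p_j)^{2n}))=e^{-\lambda_n}+o(1)$, since $\max_j(1-p_j)^n\to0$. Finally, replacing $(1-p_j)^n$ by $e^{-np_j}$ changes each factor by $e^{-np_j}O(np_j^2)$. The total change is at most $\sum_j e^{-np_j}O((\ln n)^2/n)=O(1)\cdot o(1)$. Together these give the stated limit.
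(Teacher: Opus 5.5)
Your argument is correct in substance but follows a different route from the paper. One claim in Step~1 is false as stated; you never actually need it.

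\textbf{The false claim.} It is the direction of the negative-association inequality. The events $E_j^c=\{N_j\ge 1\}$ are increasing in the multinomial counts $N_j$, so negative association gives $\mathbb{P}(\bigcap_j E_j^c)\le\prod_j\bigl(1-(1-p_j)^n\bigr)$, not $\ge$. Already for two cells the two expansions differ only by $(1-p_1-p_2)^n-(1-p_1)^n(1-p_2)^n\le 0$. Likewise, your identity $\prod_j(1-q_j)-\mathbb{P}(\bigcap_j E_j^c)=\sum_J(-1)^{|J|}\Delta_J$ does carry signs, so its summands are not nonnegative.

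None of this affects the conclusion. Your Step~2 is two-sided: you use even and odd Bonferroni truncations, with a tail $\Lambda^{2m+1}/(2m+1)!$ that is uniform over $\lambda_n\le\Lambda$. Step~3 then ties both products to $e^{-\lambda_n}$. So the chain $\mathbb{P}(N=0)=e^{-\lambda_n}+o(1)=\prod_j(1-e^{-np_j})+o(1)$ proves the lemma, and Step~1 can be deleted. The estimates you use there are all correct: $\lambda_n=O(1)$, $e_k(q)=\lambda_n^k/k!+O(k^2\max_j q_j\,\lambda_n^{k-1})$, and the per-$J$ lower bound for fixed $k$.

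\textbf{Comparison with the paper.} The paper does exactly what you regarded as the main obstacle. It expands both sides (inclusion--exclusion and the expanded product), compares $(1-\sum_{j\in J}p_j)^n$ with $e^{-n\sum_{j\in J}p_j}$ for each $J$, and bounds the sum of the \emph{absolute} differences. The signs are harmless because $\binom{P_n}{k}e^{-ank u_n^b}\le (c/k)^k$ is summable in $k$, while each difference carries the extra factor $\min(1,nk^2u_n^{2b})$. This gives the explicit rate $O(\ln^2 n/n)$.

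In your own notation the same idea is even shorter. For $n\ge 4$ one has $0\le\Delta_J\le\prod_{j\in J}q_j\,\min(1,nk^2p_{\max}^2)$, and $e_k(q)\le\lambda_n^k/k!$, hence
\[
\sum_J\Delta_J\le n\,p_{\max}^2\sum_{k\ge 0}\frac{k^2\lambda_n^k}{k!}=O\Bigl(\frac{\ln^2 n}{n}\Bigr).
\]
This makes both negative association and the Bonferroni/Poisson step unnecessary.

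What your route buys is the conceptual picture: the number of empty balls is asymptotically Poisson with mean $\lambda_n$. That leads directly to the form $\exp(-P_n e^{-anu_n^b})$ of Proposition~\ref{main}. What it costs is a convergence rate, since the $m\to\infty$ Bonferroni argument is purely qualitative.
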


\noindent\textbf{Remark.} \textcolor{black}{
As will be seen in the proof of Lemma~\ref{propak}, 
Assumption~\eqref{intersectionvide} can be replaced by the weaker condition
\[
\mathbb{P}\bigl(X \in B(c_i, u_n) \cap B(c_j, u_n)\bigr) = 0,
\]
for all $i \neq j$, and the conclusion of the lemma still holds. 
In other words, the balls $B(c_i,u_n)$ and $B(c_j,u_n)$ may intersect, 
but the probability that $X$ belongs to their intersection is zero.}
\\
If we strengthen the \((a',a,b)\)-assumption to the condition given in Definition~\ref{order}, we obtain a sharper limit result than the one stated in Lemma~\ref{propak}. This is the purpose of Proposition~\ref{main}. 

Note that Proposition~\ref{main} also involves a \(u_n\)-packing, which is not required to be maximal.

\begin{pro}\label{main}
Let \(X_1, \ldots, X_n\) be i.i.d. random variables uniformly distributed on the compact set $\mathbb{M}$ of ${\mathbb R^d}$. Suppose that the requirement of Definition \ref{order} is satisfied. Let \((u_n)\) be a sequence {\textcolor{black}{of positive numbers}} tending to \(0\) as \(n\) tends to infinity, for which Condition (\ref{conditionun}) is satisfied.
\
Let \(c_1, \ldots, c_{P_n}\) be a \(u_n\)-packing of the set \(\mathbb{M}\). 
Then,
\[
\lim_{n \rightarrow \infty}
\left|
\mathbb{P}\left(
\max_{1 \le j \le P_n}\,
\min_{1 \le i \le n}
\bigl\| X_i - c_j \bigr\|
\le u_n
\right)
-
\exp\left(
- P_n \exp\bigl(- a\, n\, u_n^b\bigr)
\right)
\right|
= 0.
\]
\end{pro}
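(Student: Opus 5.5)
The plan is to reduce Proposition~\ref{main} to Lemma~\ref{propak} and then compare the product appearing there with $\exp(-P_n e^{-anu_n^b})$. The requirement of Definition~\ref{order} implies the $(a',a,b)$-assumption: for $\varepsilon$ small we have $\mathbb{P}(X\in B(x,\varepsilon))\ge a\varepsilon^b-d\varepsilon^{b+\alpha}\ge (a/2)\varepsilon^b$, and similarly an upper bound $2a\varepsilon^b$. So Lemma~\ref{propak} applies, with $a$ replaced by $a/2$. Condition (\ref{conditionun}) is stated with the constant $a$, but only its consequences are used ($nu_n^b\asymp \ln n$, so $u_n\to0$). I will check that the proof of Lemma~\ref{propak} uses only these consequences, or else apply it directly with the exact constant $a$ as the lower constant, since $a\varepsilon^b-d\varepsilon^{b+\alpha}$ differs from $a\varepsilon^b$ only at second order. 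It then suffices to show
\[
\Bigl|\prod_{j=1}^{P_n}\bigl(1-e^{-np_j}\bigr)-\exp\bigl(-P_n e^{-anu_n^b}\bigr)\Bigr|\to0,\qquad p_j:=\mathbb{P}(\|X_1-c_j\|\le u_n).
\]

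\textbf{Step 1: uniform replacement of $p_j$.} By Definition~\ref{order}, $|np_j-anu_n^b|\le d\,n u_n^{b+\alpha}$ uniformly in $j$. From (\ref{conditionun}), $nu_n^b=O(\ln n)$ and $u_n^b=O(\ln n/n)$, so
\[
\delta_n:=d\,nu_n^{b+\alpha}=O\bigl((\ln n)^{1+\alpha/b}n^{-\alpha/b}\bigr)\to0 .
\]
Hence $e^{-np_j}=e^{-anu_n^b}(1+\eta_{n,j})$ with $\sup_j|\eta_{n,j}|\le e^{\delta_n}-1\to0$.

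\textbf{Step 2: control of $P_n e^{-anu_n^b}$.} By (\ref{P1}) in the proof of Lemma~\ref{lempacking}, any $u_n$-packing satisfies $P_n\le 2/(au_n^b)$ for large $n$. Condition (\ref{conditionun}) gives $e^{-anu_n^b}\le e^K\ln n/n$ and $1/u_n^b\le an/(\ln n-\ln\ln n-K)$. Therefore $\lambda_n:=P_ne^{-anu_n^b}$ is bounded by a constant $\Lambda$ depending only on $K$. This boundedness is the crux: with $x_n=e^{-anu_n^b}\to0$, it makes all errors summable over $j$.

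\textbf{Step 3: comparison.} Write
\[
\ln\prod_j\bigl(1-e^{-np_j}\bigr)=\sum_j \ln\bigl(1-x_n(1+\eta_{n,j})\bigr).
\]
Using $|\ln(1-y)+y|\le y^2$ for $y\le1/2$, this equals $-\lambda_n+R_n$ with
\[
|R_n|\le \lambda_n\sup_j|\eta_{n,j}|+P_n x_n^2(1+o(1))^2\le \Lambda\bigl(e^{\delta_n}-1\bigr)+\Lambda x_n(1+o(1)) \to0 .
\]
Since $t\mapsto e^{-t}$ is $1$-Lipschitz on $[0,\infty)$, and both $\lambda_n$ and $\lambda_n-R_n$ stay in a bounded range, we get $|e^{-\lambda_n+R_n}-e^{-\lambda_n}|\le e^{|R_n|}-1\to0$. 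Combining this with Lemma~\ref{propak} and the triangle inequality gives the claim.

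The main obstacle I expect is Step 2 together with the compatibility of constants in invoking Lemma~\ref{propak}. If $P_n$ were not of order $u_n^{-b}$, the product could not be linearized uniformly. The bound coming from the $(a,b)$-lower bound (\ref{P1}) is precisely what keeps $\lambda_n$ bounded. I would also double-check that the error $nu_n^{b+\alpha}$ truly vanishes; it does, because the polynomial decay $n^{-\alpha/b}$ beats the logarithmic factors.
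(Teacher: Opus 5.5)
Your route is the paper's: reduce to Lemma~\ref{propak}, then compare $\prod_j(1-e^{-np_j})$ with $\exp(-P_ne^{-anu_n^b})$ using the same three facts as the paper's Lemma~\ref{propak1}. These are $\delta_n=d\,nu_n^{b+\alpha}\to0$, $P_ne^{-anu_n^b}=O(1)$ and $P_ne^{-2anu_n^b}\to0$. You linearize the logarithm, whereas the paper applies $|\prod_j u_j-\prod_j v_j|\le\sum_j|u_j-v_j|$ twice; this difference is inessential, and your Steps~1--3 are correct as written.

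What you must settle, rather than hedge, is the appeal to Lemma~\ref{propak}, and only your second option works.

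\emph{Why the $a/2$ option fails.} The proof of that lemma does not run on $nu_n^b\asymp\ln n$ alone. What it really needs bounded is $P_n\max_j e^{-np_j}$: this gives $\sum_j e^{-np_j}=O(1)$ in term $I$, and $\binom{P_n}{k}e^{-nx_k}\le\bigl(eP_n\max_je^{-np_j}/k\bigr)^k\le(c/k)^k$ in term $II$. Take the lower constant $a/2$ and a packing of size $P_n\asymp u_n^{-b}$ (e.g.\ a maximal one). Then all you can extract from Condition~(\ref{conditionun}) is $P_n\max_je^{-np_j}\le P_ne^{-(a/2)nu_n^b}\asymp\sqrt{n/\ln n}$. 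The corresponding bound $\sum_k\binom{P_n}{k}e^{-(a/2)nku_n^b}$ on the inclusion--exclusion tail then grows like $\exp(\mathrm{cst}\sqrt{n/\ln n})$.

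\emph{Why the lemma cannot be quoted verbatim.} Definition~\ref{order} gives only $p_j\ge au_n^b-du_n^{b+\alpha}$, not the $(a,b)$-standard lower bound $p_j\ge au_n^b$ that Lemma~\ref{propak} assumes.

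\emph{The fix.} Rerun the lemma's proof using your Step~1. From $np_j\ge anu_n^b-\delta_n$ you get $\sum_je^{-np_j}\le e^{\delta_n}P_ne^{-anu_n^b}=O(1)$ in term $I$. In term $II$ you get $e^{-nx_k}\le e^{k\delta_n}e^{-anku_n^b}$, and the factor $e^{k\delta_n}\le 2^k$ is absorbed into $c$. The upper bound $p_j\le 2au_n^b$ plays the role of $a'$.

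The paper itself passes over this point: it uses $p_j\ge au_n^b$ both when invoking Lemma~\ref{propak} and inside the proof of Lemma~\ref{propak1}, via $|e^{-x}-e^{-y}|\le e^{-x}|x-y|$ for $y\ge x$. So once you commit to the perturbative version, your argument is more careful than the published one.
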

We now suppose that Assumption~~\hyperref[assumpP]{${(\mathcal P)}$}  holds. In this case, we still obtain a sharper limit result.

\begin{coro}\label{main2}
Let \(X_1, \ldots, X_n\) be i.i.d. random variables uniformly distributed on the compact set $\mathbb{M}$ of ${\mathbb R^d}$. Suppose that the requirement of Definition \ref{order} is satisfied.
 Suppose also that Assumption ~\hyperref[assumpP]{${(\mathcal P)}$}  is satisfied.
Let \(u \in \mathbb{R}\) be fixed and define a sequence \(\bigl(U_n(u)\bigr)_n\) satisfying
\begin{equation}\label{lambert}
\lim_{n \rightarrow \infty} 
\left| 
n\,a\, U_n^b(u)
-
W\bigl(n\, a\, c(\mathbb{M})\, \exp(u)\bigr) 
\right|
= 0,
\end{equation}
where \(W\) is the Lambert function and $c(\mathbb{M})$ is the constant appearing in Assumption ~\hyperref[assumpP]{${(\mathcal P)}$}.
Let \(c_1,\dots, c_{P_n}\) be a \(U_n(u)\)-maximal packing (satisfying Assumption ~\hyperref[assumpP]{${(\mathcal P)}$}).
Then,
\[
\lim_{n \rightarrow \infty} 
\mathbb{P}\left(
\max_{1 \le j \le P_n}\,
\min_{1 \le i \le n}
\|X_i - c_j\| \le U_n(u)
\right)
=
\exp\bigl(- \exp(-u)\bigr).
\]
\end{coro}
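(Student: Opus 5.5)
The plan is to deduce the corollary from Proposition~\ref{main} with $u_n = U_n(u)$ and $P_n = P(\mathbb{M},U_n(u))$. Two things must be checked: Condition~(\ref{conditionun}) holds for $U_n(u)$, and
\[
P_n \exp\bigl(-a n U_n^b(u)\bigr) \to \exp(-u).
\]
Since $x\mapsto \exp(-x)$ is continuous, the second fact turns the conclusion of Proposition~\ref{main} into the claimed limit $\exp(-\exp(-u))$.

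\emph{Step 1 (Lambert asymptotics and Condition~(\ref{conditionun})).} Set $z_n = n a c(\mathbb{M}) e^u \to \infty$. Recall that $W(z) = \ln z - \ln\ln z + o(1)$ as $z\to\infty$. Since $\ln z_n = \ln n + O(1)$, we get $\ln\ln z_n = \ln\ln n + o(1)$. Hence $W(z_n) = \ln n - \ln\ln n + O(1)$. By (\ref{lambert}), $a n U_n^b(u) = W(z_n) + o(1)$, so $|a n U_n^b - \ln n + \ln\ln n|$ is bounded for large $n$. For the finitely many small $n$ one enlarges $K$, or equivalently modifies finitely many terms, which is harmless for a limit statement. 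In particular $U_n(u)\to 0$, since $U_n^b \sim \ln n/(an)$.

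\emph{Step 2 (the product $P_n e^{-anU_n^b}$).} Write $w_n = W(z_n)$, so that $w_n e^{w_n} = z_n$, i.e. $e^{-w_n} = w_n/z_n$. By (\ref{lambert}),
\[
P_n e^{-anU_n^b} = P_n e^{-w_n}(1+o(1)) = P_n\,\frac{w_n}{n a c(\mathbb{M}) e^u}\,(1+o(1)).
\]
Moreover $c(\mathbb{M}) U_n^{-b} = c(\mathbb{M})\, a n/(w_n+o(1))$. Therefore the target is equivalent to
\[
\ln P_n = \ln\bigl(c(\mathbb{M}) U_n^{-b}\bigr) + o(1),
\]
which gives $P_n e^{-anU_n^b} \to e^{-u}$.

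\emph{Main obstacle.} Assumption~\hyperref[assumpP]{${(\mathcal P)}$}, read literally as a ratio tending to $1$, only yields
\[
\ln P_n = \bigl(\ln(c(\mathbb{M}) U_n^{-b}) + o(1)\bigr)(1+o(1)).
\]
Because $\ln(U_n^{-b}) \sim \ln n$, this error could be unbounded. I will therefore interpret $(\mathcal P)$ as the paper intends it, namely in the additive form
\[
\ln P(\mathbb{M},\varepsilon) = \ln\bigl(c(\mathbb{M})\varepsilon^{-b}\bigr) + o(1),
\]
which is presumably what Proposition~\ref{theopacking} actually delivers. Under that reading Step 2 closes immediately, and Proposition~\ref{main} finishes the proof. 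I would state this interpretation explicitly, and if needed verify it in the examples through the Karp--Pinsky expansion (Theorem~\ref{theoK}).
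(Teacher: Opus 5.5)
Your proof is correct once $(\mathcal{P})$ is read additively, $\ln P_n=\ln\bigl(c(\mathbb{M})U_n^{-b}\bigr)+o(1)$. It follows the paper's route step for step: the Lambert asymptotics give (\ref{conditionun}), Proposition~\ref{main} reduces everything to $P_n e^{-anU_n^b}$, and $W(y)e^{W(y)}=y$ sends this to $e^{-u}$.

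The one real divergence is how $(\mathcal{P})$ is used, and there your caveat is sounder than the paper's argument. In (\ref{bound4}) the paper writes $\exp(\ln P_n)=\exp(\ln P_n/L_n)\,\exp(L_n)$ with $L_n=\ln(c(\mathbb{M})U_n^{-b})+o(1)$. This is not an identity. Even granting it, the ratio form of $(\mathcal{P})$ sends the first factor to $e$, not to $1$.

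In fact no proof can work from the ratio form alone. That form holds for $c(\mathbb{M})$ if and only if it holds for $c(\mathbb{M})e^{s}$, for any $s\in\mathbb{R}$. Moreover, a sequence satisfying (\ref{lambert}) with $c(\mathbb{M})e^{s}$ and $u$ also satisfies it with $c(\mathbb{M})$ and $u+s$. For the same packings, the literal statement would then give both limits $\exp(-e^{-u})$ and $\exp(-e^{-u-s})$. So making the additive form an explicit hypothesis is exactly what the statement needs.

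One correction to your closing remark: the additive form is \emph{not} what Proposition~\ref{theopacking} delivers, and Theorem~\ref{theoK} cannot supply it.
\begin{itemize}
\item The proof of Proposition~\ref{theopacking} only traps $P(\mathbb{M},\varepsilon)$ between $1/\max_j\mathbb{P}(X\in B(c_j,2\varepsilon))\approx 2^{-p}c(\mathbb{M})\varepsilon^{-p}$ and $c(\mathbb{M})\varepsilon^{-p}$. This fixes $\ln P(\mathbb{M},\varepsilon)$ only up to an additive $p\ln 2$, which the ratio form conceals.
\item The true leading constant of the packing number involves the optimal packing density in $\mathbb{R}^p$, which no ball-volume expansion detects.
\item For $\mathbb{S}^2$, Fejes T\'oth's bound for the Tammes problem gives $\limsup_{\varepsilon\to0}\varepsilon^{2}P(\mathbb{S}^2,\varepsilon)\le 2\pi/\sqrt{3}=\tfrac{\pi}{\sqrt{12}}\cdot 4<4=c(\mathbb{S}^2)$. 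So the additive form fails with the paper's constant.
\item For $\mathbb{S}^1$, where the packing density is $1$, it does hold with $c=\pi$.
\end{itemize}
So state the additive condition as a hypothesis on the cardinality of the packing you actually use, not as a consequence of Proposition~\ref{theopacking}.
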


We are now ready to state our main theorem and its corollary.

\begin{theo}\label{promain}
Let \(X_1, \ldots, X_n\) be i.i.d. random variables, uniformly distributed on the compact subset $\mathbb{M}$ of $\mathbb{R}^d$.
Suppose that the requirement of Definition \ref{order}  is satisfied. Suppose also that Assumption ~\hyperref[assumpP]{${(\mathcal P)}$}  is satisfied.
Define the two sequences
\[
\alpha_n = \frac{1}{b\, a^{1/b}} \frac{(\ln n)^{1/b - 1}}{n^{1/b}},
\]
and
\[
\beta_n = \frac{1}{a^{1/b}} \frac{(\ln n)^{1/b}}{n^{1/b}}
\left( 
1 + \frac{1}{b\, \ln(n)} \bigl( \ln\bigl(a\, c(\mathbb{M})\bigr) - \ln(\ln(n)) \bigr)
\right).
\]
Let \(u \in \mathbb{R}\) be fixed. {\textcolor{black}{Let $n$ be large enough so that $\alpha_n u + \beta_n > 0$.}} Let \(c_1, \ldots, c_{P_n}\) be a \((\alpha_nu+\beta_n)\)-maximal packing of  \(\mathbb{M}\). 
Then,
$$
\lim_{n \rightarrow \infty} 
\mathbb{P}\left(
\bigl\{c_1,\cdots, c_{P_n}\bigr\} \subset \bigcup_{i=1}^n B(X_i, \alpha_n\,u+ \beta_n)\right)
=
\exp\bigl(- \exp(-u)\bigr).
$$
\end{theo}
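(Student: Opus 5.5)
The event in the statement is exactly $\{\max_{1\le j\le P_n}\min_{1\le i\le n}\|X_i-c_j\|\le U_n\}$ with $U_n:=\alpha_n u+\beta_n$. This is because $c_j\in\bigcup_i B(X_i,U_n)$ holds iff $\min_i\|X_i-c_j\|\le U_n$. So the plan is to deduce the theorem from Corollary~\ref{main2}. The only thing to check is that $U_n(u):=\alpha_n u+\beta_n$ satisfies condition (\ref{lambert}), namely
\[
n a U_n^b - W\bigl(n a c(\mathbb{M}) e^u\bigr)\to 0 .
\]
Here $c_1,\dots,c_{P_n}$ is a $U_n$-maximal packing, as the corollary requires. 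Condition (\ref{conditionun}), needed inside Proposition~\ref{main}, will follow as a by-product of the same expansion.

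First I would expand $W$. The classical asymptotics of the Lambert function give, as $x\to\infty$,
\[
W(x)=\ln x-\ln\ln x+\frac{\ln\ln x}{\ln x}+O\Bigl(\bigl(\tfrac{\ln\ln x}{\ln x}\bigr)^2\Bigr).
\]
Set $x_n=n a c(\mathbb{M}) e^u$. Then $\ln x_n=\ln n+\ln(a c(\mathbb{M}))+u$ and $\ln\ln x_n=\ln\ln n+O(1/\ln n)$. Hence
\[
W(x_n)=\ln n-\ln\ln n+\ln\bigl(a c(\mathbb{M})\bigr)+u+o(1).
\]

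Second, I would expand $n a U_n^b$. Write
\[
U_n=\frac{(\ln n)^{1/b}}{a^{1/b}n^{1/b}}\,(1+\delta_n),\qquad \delta_n=\frac{1}{b\ln n}\bigl(\ln(ac(\mathbb{M}))-\ln\ln n+u\bigr),
\]
using $\alpha_n u=\frac{(\ln n)^{1/b}}{a^{1/b}n^{1/b}}\cdot\frac{u}{b\ln n}$. Then $n a U_n^b=\ln n\,(1+\delta_n)^b$. Since $\delta_n=O(\ln\ln n/\ln n)$, we have $(1+\delta_n)^b=1+b\delta_n+O(\delta_n^2)$. Therefore
\[
n a U_n^b=\ln n+\ln(ac(\mathbb{M}))-\ln\ln n+u+O\bigl((\ln\ln n)^2/\ln n\bigr).
\]
This matches $W(x_n)$ up to $o(1)$, so (\ref{lambert}) holds. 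The same display shows $|naU_n^b-\ln n+\ln\ln n|$ is bounded, which is (\ref{conditionun}). Moreover $U_n\to0$ and $U_n>0$ for $n$ large.

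Applying Corollary~\ref{main2} with this $U_n(u)$ then gives the limit $\exp(-\exp(-u))$. The only delicate point is the error bookkeeping in the two expansions. The second-order term $b\delta_n^2\ln n$ is $O((\ln\ln n)^2/\ln n)\to0$, and the $\ln\ln x_n$ versus $\ln\ln n$ discrepancy is negligible. This is why the centering $\beta_n$ must contain the $-\ln\ln n$ and $\ln(ac(\mathbb{M}))$ corrections at order $1/\ln n$ but nothing finer. I expect this to be routine rather than a real obstacle, since the substantive work has already been done in Corollary~\ref{main2}.
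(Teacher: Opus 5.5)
Your proposal is correct and follows the paper's proof essentially verbatim. Like the paper, you rewrite the event as $\{\max_{j}\min_{i}\|X_i-c_j\|\le U_n\}$ with $U_n=\alpha_n u+\beta_n$, expand $n a U_n^b=\ln n-\ln\ln n+u+\ln(a\,c(\mathbb{M}))+o(1)$ via $(1+x)^b=1+bx+O(x^2)$, match this against the Lambert asymptotics to obtain (\ref{lambert}), and invoke Corollary~\ref{main2}. Your remainder $O\bigl((\ln\ln n)^2/\ln n\bigr)$ is the accurate one; the paper writes $O(1/\ln n)$, which understates the $\delta_n^2\ln n$ term, but either way it tends to $0$.
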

\noindent {\textcolor{black}{An immediate consequence of the above theorem is the following corollary, 
which provides, in particular, upper and lower bounds on the cumulative 
distribution function of the suitably normalized Hausdorff distance 
\( d_H(\mathbb{X}_n,\mathbb{M}) \), that is, bounds for
$
\mathbb{P}\bigl(
d_H(\mathbb{X}_n,\mathbb{M})
\le \alpha_n u + \beta_n
\bigr).
$}}

\begin{coro}\label{maintheo}
Suppose that all the requirements of Theorem \ref{promain} are satisfied.
Then, for any \(u \in \mathbb{R}\),
\[
\exp\bigl(-\exp(-u)\bigr)
\;\le\; 
\liminf_{n \rightarrow \infty} 
\mathbb{P}\Bigl(\bigcup_{j=1}^{P_n} B(c_j, \alpha_n\, u + \beta_n ) \subset \bigcup_{i=1}^n B(X_i, 2\,\alpha_n\, u + 2\, \beta_n )\Bigr),
\]
\[
\limsup_{n \rightarrow \infty} \textcolor{black}{\mathbb{P}\bigl(
d_H(\mathbb{X}_n,\mathbb{M})
\le \alpha_n u + \beta_n
\bigr)}
\;\le\; \exp\bigl(-\exp(-u)\bigr),
\]

and
\begin{equation} 
\label{promainoct}
\exp\bigl(-\exp(-u)\bigr)
\;\le\; 
\liminf_{n \rightarrow \infty} {\textcolor{black}{\mathbb{P}\bigl(
d_H(\mathbb{X}_n,\mathbb{M})
\le 3\alpha_n u + 3\beta_n
\bigr).}}
\end{equation}
\end{coro}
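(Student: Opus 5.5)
The plan is to derive all three inequalities from Theorem \ref{promain}, using deterministic set inclusions and the covering property of maximal packings. Write $r_n=\alpha_n u+\beta_n$ and let $A_n$ be the event $\{c_1,\dots,c_{P_n}\}\subset\bigcup_{i=1}^n B(X_i,r_n)$. Theorem \ref{promain} gives $\mathbb P(A_n)\to\exp(-\exp(-u))$. Each claim then reduces to an inclusion between $A_n$ and the event in question.

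\textbf{First inequality.} I will show that $A_n$ implies $\bigcup_j B(c_j,r_n)\subset\bigcup_i B(X_i,2r_n)$. Take $y\in B(c_j,r_n)$. On $A_n$ there is an index $i$ with $\|c_j-X_i\|\le r_n$, so the triangle inequality gives $\|y-X_i\|\le 2r_n$. Hence the probability in the first display is at least $\mathbb P(A_n)$, and taking the $\liminf$ gives the bound.

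\textbf{Second inequality.} Since each $c_j\in\mathbb M$, the event $d_H(\mathbb X_n,\mathbb M)\le r_n$ means $\sup_{x\in\mathbb M}\min_i\|x-X_i\|\le r_n$. In particular every $c_j$ lies within $r_n$ of some $X_i$, so $\{d_H(\mathbb X_n,\mathbb M)\le r_n\}\subset A_n$. Taking the $\limsup$ gives the upper bound. The only care needed is that the minimum over the finite sample is attained, so the closed balls are the right objects.

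\textbf{Third inequality.} Because the packing is maximal at radius $r_n$, Lemma 5.2 of \cite{smale} (already used in the proof of Lemma \ref{lempacking}) gives $\mathbb M\subset\bigcup_j B(c_j,2r_n)$. So for $x\in\mathbb M$ there is a $c_j$ with $\|x-c_j\|\le 2r_n$. On $A_n$ there is an $X_i$ with $\|c_j-X_i\|\le r_n$, hence $\|x-X_i\|\le 3r_n$. Thus $A_n\subset\{d_H(\mathbb X_n,\mathbb M)\le 3r_n\}$, which is \eqref{promainoct}.

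I expect no real obstacle. The one point to check is the radius $2r_n$ in the covering: a maximal packing in the sense $\|c_i-c_j\|>2r_n$ means any $x\in\mathbb M$ not among the centers satisfies $\|x-c_j\|\le 2r_n$ for some $j$, otherwise $x$ could be added to the packing. This is exactly what the cited lemma provides.
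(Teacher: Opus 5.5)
Your proposal is correct and follows essentially the same route as the paper. The paper uses the same triangle-inequality inclusion for the first bound and the same observation $\max_j\min_i\|X_i-c_j\|\le d_H(\mathbb{X}_n,\mathbb{M})$ for the second. For the third bound it uses the covering $\mathbb{M}\subset\bigcup_j B(c_j,2r_n)$ given by maximality, packaged as Lemma~\ref{lemtool}, i.e.\ $d_H(\mathbb{X}_n,\mathbb{M})\le 2r_n+\max_j\min_i\|X_i-c_j\|$.
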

\noindent {\bf{Remark.}} {\textcolor{black}{
Note that if one is only interested in the convergence in distribution of the Hausdorff distance $d_H(\mathbb{X}_n,\mathbb{M})$ (when $\mathbb M$ is a compact $d$-dimensional Riemannian manifold
without boundary), then Proposition 2.11 in \cite{Penrose2025b} (established in a general setting) and the references therein already addresses this question, with centering and normalization constants that are asymptotically equivalent (but not equal) to $\alpha_n$ and $\beta_n$. We also note that, in this proposition, the exponent $b$ corresponds to, $d$, the dimension of the submanifold $\mathbb{M}$ and is therefore necessarily an integer. In our setting, however, we only require that $b>0$, not necessarily an integer, which could allow for examples related to fractal sets, although we do not pursue this direction here. Furthermore, the result of our Theorem \ref{promain} on maximal packings cannot be deduced from Proposition 2.11 in \cite{Penrose2025b}. Finally, the techniques used in the proofs are substantially different.
}}

\section{Examples and simulations}\label{examples}

\noindent {\textcolor{black}{The objective of this section is to provide examples, together with simulations, illustrating the conclusions of Theorem \ref{promain} and its corollary. Since this theorem relies, among other things, on Assumption~\hyperref[assumpP]{${(\mathcal P)}$}, we state the following proposition, which provides sufficient conditions for this Assumption~\hyperref[assumpP]{${(\mathcal P)}$}. 
\begin{pro}\label{theopacking}
Assume that the assumptions of Theorem~\ref{theoK} are satisfied. 
Assume moreover that the quantity
\[
\bigl| 2\|B_x\|^2 - \|H_x\|^2 \bigr|
\]
is uniformly bounded with respect to $x \in \mathbb{M}$. 
Then, as $\varepsilon \to 0$,
\[
\frac{\ln P(\mathbb{M}, \varepsilon)}
{\ln\!\left( \left(\frac{1}{\varepsilon}\right)^p 
\frac{p\,\mathrm{Vol}(\mathbb{M})}{\sigma_{p-1}} \right) + o(1)}
\longrightarrow 1.
\]
\end{pro}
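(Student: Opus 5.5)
Our plan is to sandwich $P(\mathbb{M},\varepsilon)$ between two quantities that are both of the form $\varepsilon^{-p}\,\frac{p\,\mathrm{Vol}(\mathbb{M})}{\sigma_{p-1}}\cdot C$ with $C$ bounded above and below, and then take logarithms. We first convert Theorem~\ref{theoK} into a uniform two-sided volume estimate. Writing $\kappa := \sup_{x\in\mathbb{M}}|2\|B_x\|^2-\|H_x\|^2|<\infty$ and using that the remainder $O(\varepsilon^{p+3})$ is uniform on the compact $\mathbb{M}$, we obtain constants $D>0$ and $\varepsilon_0>0$ such that, for all $x\in\mathbb{M}$ and $0<\varepsilon<\varepsilon_0$,
\[
\Bigl|\operatorname{Vol}\bigl(\mathbb{M}\cap B(x,\varepsilon)\bigr)-\tfrac{\sigma_{p-1}}{p}\varepsilon^p\Bigr|\le D\varepsilon^{p+2}.
\]
Dividing by $\mathrm{Vol}(\mathbb{M})$, the uniform law on $\mathbb{M}$ satisfies Definition~\ref{order} with $a=\sigma_{p-1}/(p\,\mathrm{Vol}(\mathbb{M}))$, $b=p$, $\alpha=2$. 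In particular, the $(a',a,b)$-assumption holds with $a'=2a$ and with $a/2$ in place of $a$, for $\varepsilon$ small.

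We then run the two volume arguments from the proof of Lemma~\ref{lempacking}, keeping track of the constants. For the upper bound, the disjoint balls $B(c_j,\varepsilon)$ of a maximal packing give
\[
P(\mathbb{M},\varepsilon)\bigl(a\varepsilon^p - d\varepsilon^{p+2}\bigr)\le 1,
\qquad\text{so}\qquad
P(\mathbb{M},\varepsilon)\le \frac{1}{a\varepsilon^p}\,(1+O(\varepsilon^2)).
\]
For the lower bound, maximality gives the covering $\mathbb{M}\subset\bigcup_j B(c_j,2\varepsilon)$, so
\[
1\le P(\mathbb{M},\varepsilon)\bigl(a(2\varepsilon)^p+d(2\varepsilon)^{p+2}\bigr),
\qquad\text{hence}\qquad
P(\mathbb{M},\varepsilon)\ge \frac{2^{-p}}{a\varepsilon^p}\,(1+O(\varepsilon^2)).
\]

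Taking logarithms, $\ln P(\mathbb{M},\varepsilon)=\ln(\varepsilon^{-p}/a)+R(\varepsilon)$ with $|R(\varepsilon)|\le p\ln 2+o(1)$, and $\ln(\varepsilon^{-p}/a)=\ln\bigl(\varepsilon^{-p}\,p\,\mathrm{Vol}(\mathbb{M})/\sigma_{p-1}\bigr)\to\infty$. The ratio in the statement therefore equals
\[
1+\frac{R(\varepsilon)-o(1)}{\ln(\varepsilon^{-p}/a)+o(1)}\longrightarrow 1,
\]
which proves the claim.

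The main obstacle is only interpretive. The assertion as written, with a bounded correction absorbed by a divergent logarithm, is weak enough that the factor $2^{-p}$ lost between packing and covering is harmless. If instead one wanted $\ln P(\mathbb{M},\varepsilon)-\ln(c(\mathbb{M})\varepsilon^{-p})\to 0$, this crude sandwich would not suffice: one would need the sharp constant of the packing density, which is not $1$ in general. So we would prove and use only the ratio form, and we would check that Assumption~\hyperref[assumpP]{${(\mathcal P)}$} is indeed used only in that form.
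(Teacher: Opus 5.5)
Your proof is correct and follows essentially the paper's route. Both use the same packing/covering sandwich $1/\max_j\mathbb{P}(X\in B(c_j,2\varepsilon))\le P(\mathbb{M},\varepsilon)\le 1/\min_j\mathbb{P}(X\in B(c_j,\varepsilon))$ with the uniform Karp--Pinsky expansion; the paper just normalizes by $-\ln\varepsilon$ and goes through Lemma~\ref{proPackingNumber}, whose $B/|\ln\varepsilon|$ term is not actually identified because of the $p\ln 2$ loss from the radius $2\varepsilon$, exactly as your bound $|R(\varepsilon)|\le p\ln 2+o(1)$ shows.

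Your closing caveat is justified: the ratio form leaves $c(\mathbb{M})$ undetermined. Yet the proof of Corollary~\ref{main2}, in passing from \eqref{bound4} to $P_n\exp(-n\,U_n^b(u)\,a)\to e^{-u}$, actually requires $P_n\sim c(\mathbb{M})\,U_n^{-b}(u)$, which neither your argument nor the paper's supplies.
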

}}
\noindent {\textcolor{black}{
We prove Proposition \ref{theopacking} later in Subsection \ref{proofPro}, and then continue the study of examples with simulations. We now have all the ingredients needed for this study. For each example, we verify that the assumptions of our Theorem~\ref{promain} are satisfied; more precisely, the $(a,b)$-standard assumption with rate $\alpha$ (as described in Definition~\ref{order}), as well as Assumption~\hyperref[assumpP]{${(\mathcal P)}$}. We also explicitly provide the expressions of $a$, $b$, and $c(\mathbb{M})$ for the three studied examples. Note that knowing these values of $a$, $b$, and $c(\mathbb{M})$ is sufficient to derive the centering and normalizing sequences $(\alpha_n)_n$ and $(\beta_n)_n$ of Theorem~\ref{promain}. }}

{\textcolor{black}{
The three examples studied concern the circle, the sphere, and the torus. For each example, we verify the assumptions of Theorem~\ref{promain} based on Theorem~\ref{theoK}. In particular, we illustrate the convergence toward the Gumbel law established in Theorem~\ref{promain}, as well as the asymptotic confidence bands derived in Corollary~\ref{maintheo}.  
The illustration of the convergence to the Gumbel law is based on the following algorithm that outlines the different steps involved in these simulations.
}}


\begin{algorithm}
\caption{Simulation of the Coverage Probability on $\mathbb M$}
\begin{algorithmic}[1]

\State \textbf{Input:} Sample size $n$, number of trials $T$, list of values $u_1, \ldots, u_K$
\State \textbf{Constants:} $a$, $b$, $c(\mathbb{M})$

\For{$k = 1$ to $K$}
    \State $u \gets u_k$
    \State Compute $\alpha_n = \dfrac{1}{b\, a^{1/b}} \dfrac{(\ln n)^{1/b - 1}}{n^{1/b}}$
    \State Compute $\beta_n = \dfrac{1}{a^{1/b}} \dfrac{(\ln n)^{1/b}}{n^{1/b}} \left(1 + \dfrac{1}{b\, \ln(n)} \left( \ln(ac(\mathbb{M})) - \ln(\ln(n)) \right)\right)$
    \State $r \gets \alpha_n \cdot u + \beta_n$
    \State Generate a \textbf{$r$-maximal packing} $\{c_1, \ldots, c_P\}$ of $\mathbb M$.
    \State $C \gets 0$ 

    \For{$t = 1$ to $T$}
        \State Generate $x_1, \ldots, x_n$ uniformly on $\mathbb M$
        \State \textbf{For each} center $c_j$, check if there exists a point $x_i$  such that \newline $\|x_i - c_j\| \leq r$
        \If{all $c_j$ are covered}
            \State $C \gets C + 1$
        \EndIf
    \EndFor

    \State Estimate empirical probability: $p_k^{(sim)} \gets \dfrac{C}{T}$
    \State Compute theoretical value: $p_k^{(theo)} \gets \exp(-\exp(-u))$
\EndFor

\State \textbf{Output:} Plot of $(u_k, p_k^{(sim)})$ vs. $(u_k, p_k^{(theo)})$

\end{algorithmic}
\end{algorithm}

%

\subsection{The unit circle on $\mathbb R^2$}
Consider the compact set
\[
\mathbb{M} = \mathbb{S}^1 
= \bigl\{ (x,y) \in \mathbb{R}^2 : x^2 + y^2 = 1 \bigr\}
\]
equipped with the \emph{arc-length measure}.  Suppose that $X$ is distributed as the uniform law on the circle $\mathbb{S}^1$.
Let $x \in \mathbb{S}^1$. For small $\varepsilon > 0$, we have 
\begin{eqnarray*}
&& \mathbb{P}\bigl(X\in B(x, \varepsilon)\bigr)
=
\frac{\operatorname{length}\bigl(B(x,\varepsilon)\cap \mathbb{S}^1 \bigr)}{2\pi}.
\end{eqnarray*}
We apply Theorem \ref{theoK} with $p=1$
The principal curvature of $\mathbb{S}^1$ is 
$
\kappa = 1.
$
We also have, for this example, 
\[
\|B_x\|^2 = \kappa^2, 
\qquad
\|H_x\|^2 = \kappa^2.
\]
Hence
\[
2\|B_x\|^2 - \|H_x\|^2 = 1.
\]
Moreover, the surface measure of the $0$-dimensional sphere is
\[
\sigma_0 = 2.
\]
The volume (i.e., the length) of the intersection of the circle with the extrinsic ball satisfies then, thanks to Theorem \ref{theoK}, for $\varepsilon \to 0$, 
\[
\operatorname{length}\bigl(B(x,\varepsilon)\cap \mathbb{S}^1 \bigr) = 2\varepsilon + \frac{\varepsilon^3}{12} + O(\varepsilon^4),
\]
the reminder term $O(\varepsilon^4)$ is uniform for $x\in \mathbb{S}^1$. 
Consequently,

\begin{eqnarray*}
&& \mathbb{P}\bigl(X\in B(x, \varepsilon)\cap \mathbb{S}^1\bigr)
= \frac{\varepsilon}{\pi}  + \frac{\varepsilon^2}{24\pi} + O(\varepsilon^4).
\end{eqnarray*}
Hence, Definition \ref{order} is satisfied with $$a=\frac{1}{\pi},\,\, b=1,\,\, \alpha=1.$$  
\\
Since  $2\|B_x\|^2 - \|H_x\|^2 = 1$, Proposition \ref{theopacking} applies and ensures Assumption
~\hyperref[assumpP]{${(\mathcal P)}$} with $b=p=1$ and 
$$
c(\mathbb{S}^1)= \frac{\,\operatorname{length}(\mathbb{S}^1)}{\sigma_{0}}=  \frac{2\pi}{2}= \pi.
$$

\subsubsection*{Convergence to the Gumbel distribution}
The purpose now is to perform simulations of $n$ independent realizations of a random variable uniformly distributed on the unit circle $\mathbb{S}^1$. The goal is to illustrate the convergence of the coverage probability towards a Gumbel distribution, as stated in Theorem~\ref{promain}. 
\begin{figure}[htb]
\begin{center}
\includegraphics[width=12cm]{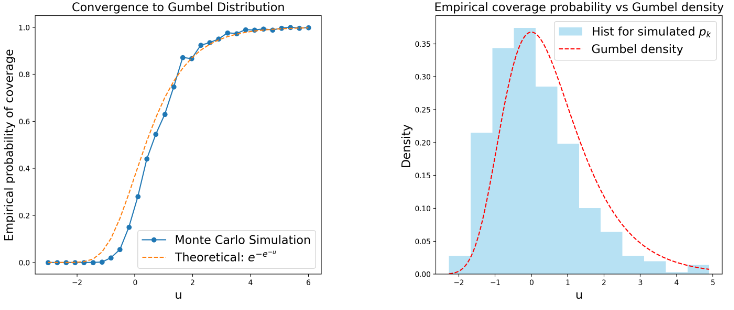}
\vspace{-0.3cm}
\caption{{\small {Illustration of the convergence to the Gumbel distribution: the case of the unit circle.}}}
\end{center}
\end{figure}

\subsubsection*{Confidence sets} 
The purpose now is to illustrate confidence sets for a maximal packing set, as well as for the set $\mathbb{M}$.

\begin{figure}[htb]
\begin{center}
\includegraphics[width=9.5cm]{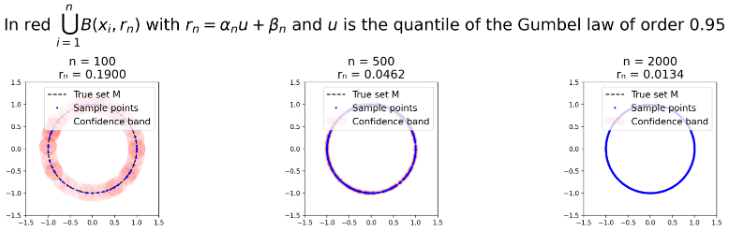}
\caption{{ Asymtotic confidence set for a maximal $r_n$-packing set (Theorem \ref{promain})}}
\end{center}
\vspace{0.5cm}
\begin{center}
\includegraphics[width=9.5cm]{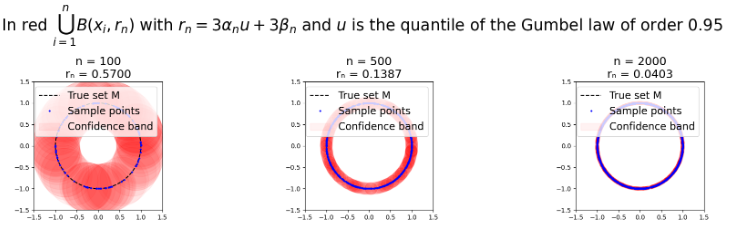}
\caption{Asymtotic confidence set for the support $\mathbb{M}$ (the limit in (\ref{promainoct}))}
\end{center}
\end{figure}

\newpage

\subsection{The Sphere \texorpdfstring{$\mathbb{S}^2$}{S²} in \texorpdfstring{$\mathbb{R}^3$}{R³}}
The unit sphere is given by:
\[
\mathbb{S}^2 = \left\{ (x, y, z) \in \mathbb{R}^3 : x^2 + y^2 + z^2 = 1 \right\},
\]
it's a compact $2$-dimensional  manifold without boundary. We apply Theorem \ref{theoK} with $p=2$.
The principal curvatures
are $\kappa_1 = \kappa_2 = 1$, hence
\[
\|B_x\|^2 = 2,
\qquad
\|H_x\|^2 = 4,
\]
and therefore
\[
2\|B_x\|^2 - \|H_x\|^2 = 0.
\]
Moreover, the surface measure of the $1$-dimensional unit sphere is
\[
\sigma_1 = 2\pi.
\]
It follows from Theorem \ref{theoK} that, for any $x \in S^2$, as $\varepsilon \to 0$,
\[
\operatorname{Vol}(\mathbb{S}^2 \cap B(x,\varepsilon))
=
\pi \varepsilon^2
+
O(\varepsilon^5),
\]
$\operatorname{Vol}$ means here the surface and the reminder term $O(\varepsilon^5)$ is uniform for $x \in \mathbb{S}^2$. 
Consequently, for small positive $\varepsilon$, $x\in \mathbb{S}^2$ and $X$ a random variable uniformly distributed on $\mathbb{S}^2$, it holds
\begin{eqnarray*}
&& \mathbb{P}(X \in \mathbb{S}^2 \cap B(x,\varepsilon)) =  \frac{\operatorname{Vol}(B(x,\varepsilon) \cap \mathbb{S}^2)}{4 \pi} = \frac{\varepsilon^2}{4} +  O(\varepsilon^5).
\end{eqnarray*}
Hence, Definition \ref{order} is satisfied with
$$
a=\frac{1}{4},\,\, b=2,\,\, \alpha=3.
$$
Since $2\|B_x\|^2 - \|H_x\|^2 = 0$, Proposition \ref{theopacking} applies and ensures Assumption 
~\hyperref[assumpP]{${(\mathcal P)}$} with $b=p=2$ and 
$$
c(\mathbb{S}^2)= \frac{p\,\mathrm{Vol}(\mathbb{S}^1)}{\sigma_{p-1}}=  \frac{8\pi}{2\pi}= 4.
$$
\subsubsection*{Convergence to the Gumbel distribution.}
The goal is to illustrate the convergence of the coverage probability towards a Gumbel distribution, as stated in Theorem~\ref{promain}. 
\begin{figure}[htb]
\begin{center}
\includegraphics[width=12cm]{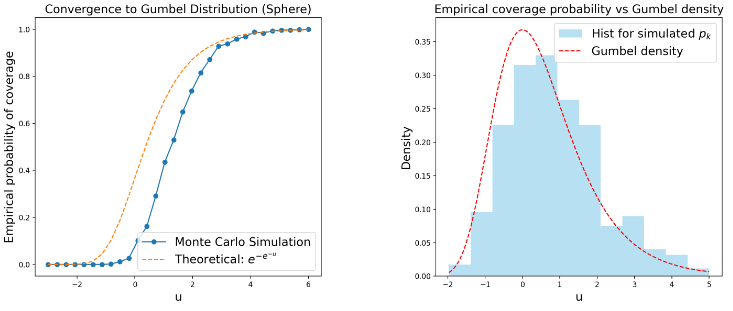}
\vspace{-0.3cm}
\caption{{\small {Illustration of the convergence to the Gumbel distribution: the case of the unit Sphere.}}}
\end{center}
\end{figure}
\subsubsection*{Confidence sets.}
The purpose is to illustrate the conclusion of Corollary \ref{maintheo}.
\begin{figure}[htb]
\begin{center}
\includegraphics[width=10cm]{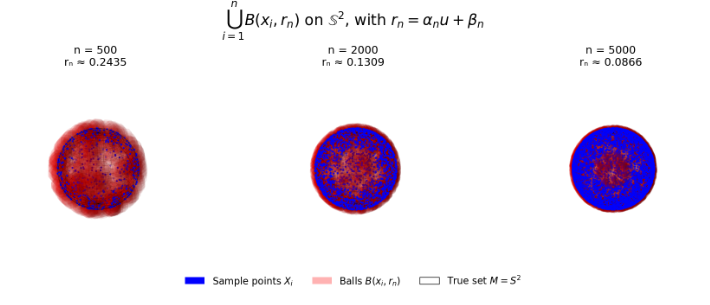}
\end{center}
\vspace{0.25cm}
\begin{center}
\includegraphics[width=10cm]{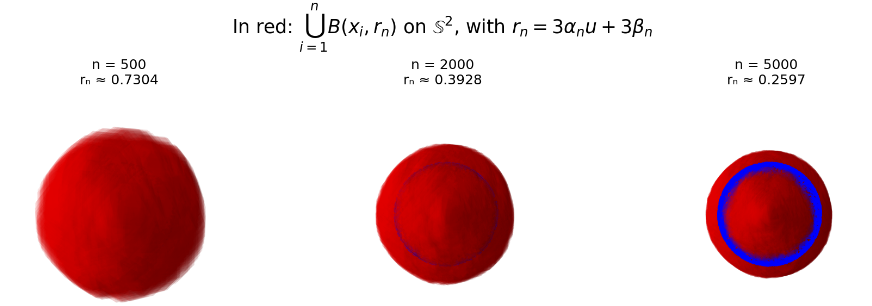}
\caption{Asymtotic confidence set for a maximal packing set and for the support (Theorem \ref{promain} and of the limit in (\ref{promainoct})), $u$ is still the quantile of the Gumbel law of order $0.95$.}
\end{center}
\end{figure}

\newpage

\subsection{The torus $T(R,r)$ in $\mathbb{R}^3$}
Let 
\[
T(R,r) = \left\{ (x_0,y_0,z_0) \in \mathbb{R}^3 : (\sqrt{x_0^2+y_0^2}-R)^2 + z_0^2 = r^2 \right\}
\]
be the standard torus with major radius $R > r > 0$.
\begin{figure}[htb]
\begin{center}
\includegraphics[width=4cm]{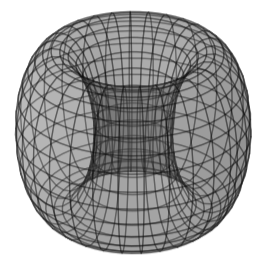}
\caption{The torus T(2,1)}
\end{center}
\end{figure}

Here, the dimension of the submanifold is $p=2$, and the ambient space dimension is $d=3$.
Parametrizing by $(\theta,\phi) \in [0,2\pi)^2$,
\[
\begin{cases}
x_0 = (R + r \cos\theta) \cos\phi, \\
y_0 = (R + r \cos\theta) \sin\phi, \\
z_0 = r \sin\theta,
\end{cases}
\]
the principal curvatures are
\[
\kappa_1 = \frac{\cos\theta}{R + r \cos\theta}, 
\qquad
\kappa_2 = \frac{1}{r}.
\]
Hence,
\[
\|B_x\|^2 = \kappa_1^2 + \kappa_2^2, 
\qquad
\|H_x\|^2 = (\kappa_1 + \kappa_2)^2, 
\qquad
2\|B_x\|^2 - \|H_x\|^2 = (\kappa_1 - \kappa_2)^2.
\]
Finally, the surface measure of the unit sphere in $\mathbb{R}^2$ is
\[
\sigma_1 = 2\pi.
\]
Then, Theorem \ref{theoK} gives, as $\varepsilon \to 0$, 
\[
\operatorname{Vol}\big(T(R,r) \cap B(x,\varepsilon)\big)
=
\pi \, \varepsilon^2
+
\frac{\pi}{32} \, (\kappa_1 - \kappa_2)^2 \, \varepsilon^4
+
O(\varepsilon^5),
\]
$O(\varepsilon^5)$ and $(\kappa_1 - \kappa_2)^2$ are uniformly bounded over $\theta$ and $\phi$ (and then over $x_0,y_0,z_0$). The requirements of Definition \ref{order} are then satisfied (noting that the surface of $T(R,r)$ is equal to $4\pi^2Rr$) with,
$$
a= \frac{\pi}{4\pi^2Rr}=\frac{1}{4\pi Rr},\,\,\, b=2,\,\,\, \alpha=2.
$$
Since $2\|B_x\|^2 - \|H_x\|^2$ is uniformly bounded over $x$ Proposition \ref{theopacking} applies 
and ensures Assumption 
~\hyperref[assumpP]{${(\mathcal P)}$} with $b=p=2$ and 
$$
c(T(R,r))= \frac{p\,\mathrm{Vol}(T(R,r))}{\sigma_{p-1}}=  \frac{8\pi^2 Rr}{2\pi}= 4 \pi Rr.
$$
\subsubsection*{Convergence to the Gumbel distribution.}
The purpose is to illustrate the convergence to the Gumbel distribution.
\begin{figure}[htb]
\begin{center}
\includegraphics[width=12cm]{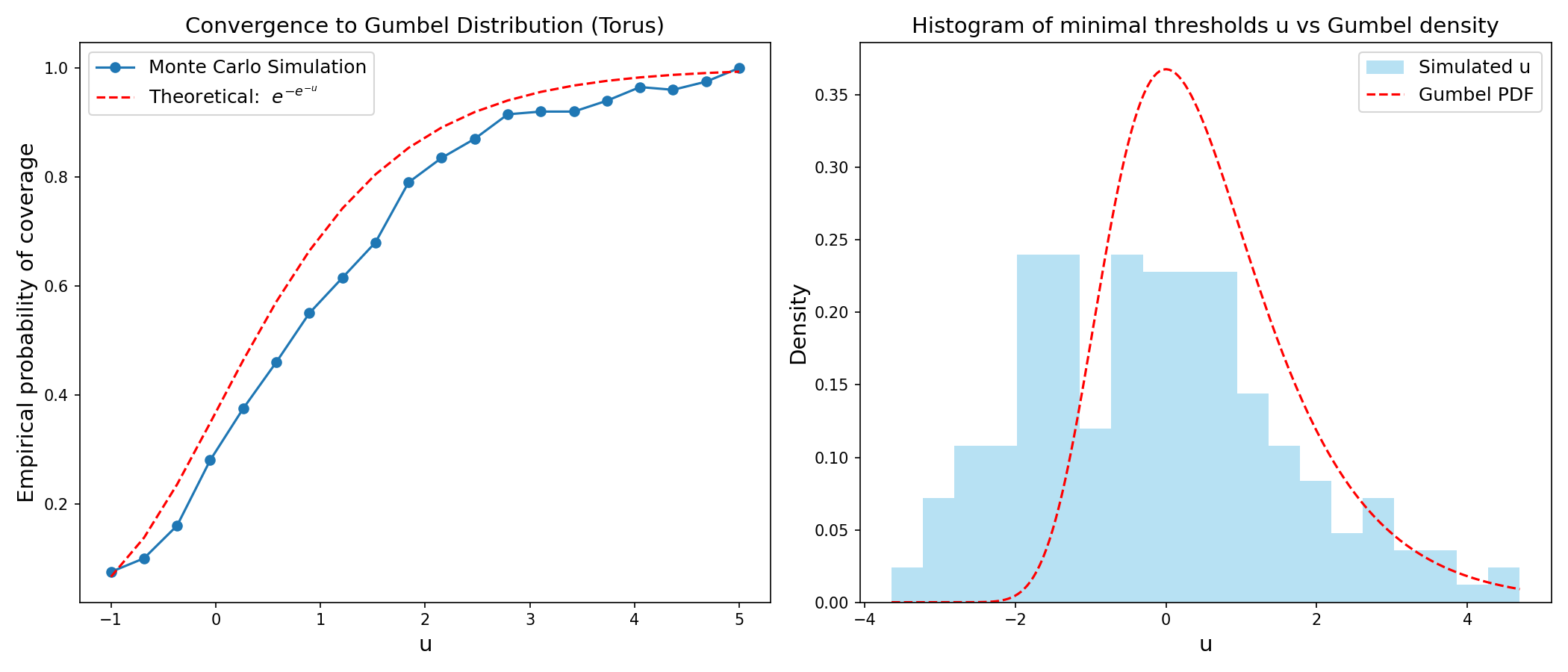}
\vspace{-0.3cm}
\caption{{\small {Illustration of the convergence to the Gumbel distribution: the case of the torus T(2,1). The convergence of the histogram toward the Gumbel density is imperfect, due to finite-sample size and random packing effects.}}}
\end{center}
\end{figure}

\subsubsection*{Confidence sets.}
We now illustrate the conclusion of Corollary \ref{maintheo} for the torus $T(2,1)$.
\begin{figure}[htb]
\begin{center}
\includegraphics[width=12cm]{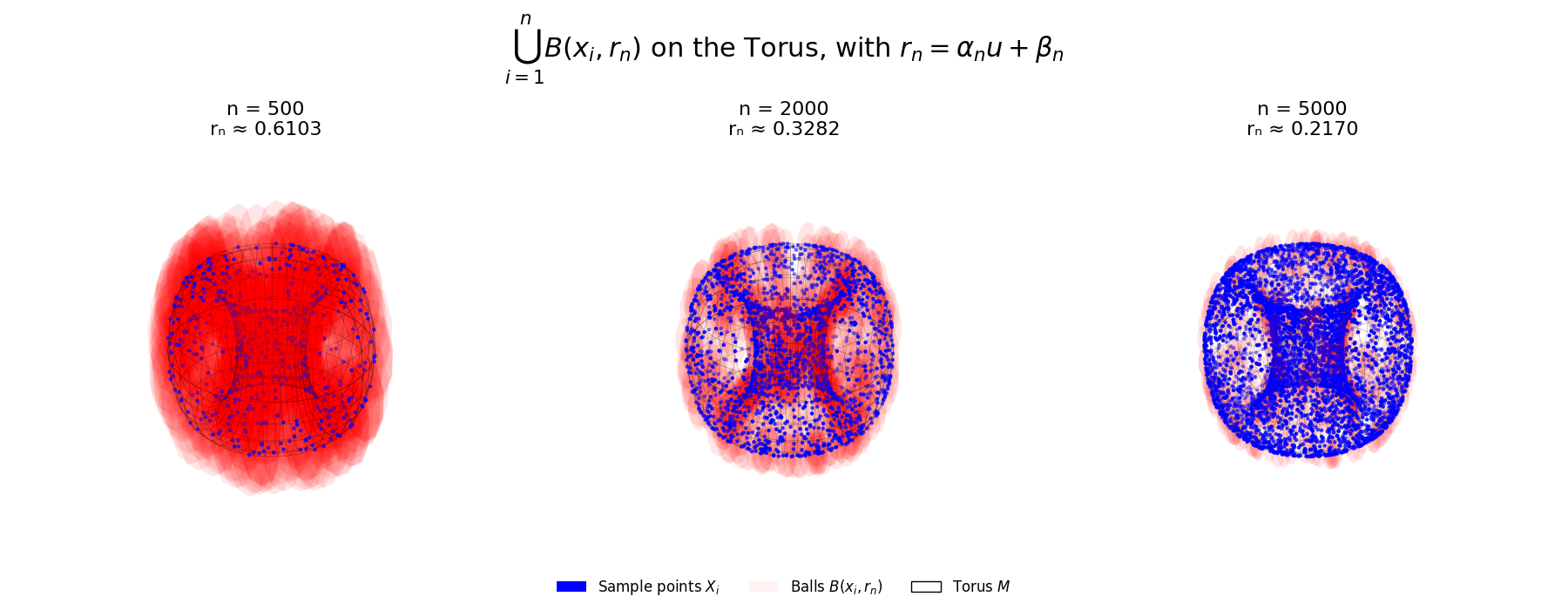}
\caption{Asymtotic confidence set for a maximal packing set (Theorem \ref{promain}), $u$ is still the quantile of the Gumbel law of order $0.95$.}
\end{center}
\end{figure}

\section{Proofs}\label{proofs}

We  recall first some bounds that will be needed in several places.
If the sequence \((u_n)_n\) satisfies \eqref{conditionun}, then clearly,
\begin{equation}\label{bound1}
\lim_{n \rightarrow \infty} \frac{a\, n\, u_n^b}{\ln n} = 1,
\quad
\exp\bigl(- a\, n\, u_n^b \bigr) 
\le e^K \frac{\ln n}{n},
\quad
n\, u_n^{2b} = O\left( \frac{\ln^2(n)}{n} \right).
\end{equation}
Now, let \(c_1, \ldots, c_{P_n}\) be a \(u_n\)-packing of \(\mathbb M\), not necessarily maximal. We need an upper bound for \(P_n\), where $(u_n)$ is the sequence satisfying Condition (\ref{conditionun}). From Lemma \ref{lempacking} and its proof, we have
\[
a\, u_n^b\, P_n
\;\le\; 1.
\]
Therefore, by Condition \eqref{conditionun}, we obtain for \(n\) large enough,
\begin{equation}\label{bound2}
P_n 
\;\le\; 
\frac{1}{au_n^{b}}
\;\le\;
\frac{n}{\ln(n) - \ln(\ln(n)) - K}
\;\le\; 
\text{cst} \cdot \frac{n}{\ln(n)}.
\end{equation}

Bounds \eqref{bound1} and \eqref{bound2} will be used in several places throughout this paper.
\subsection{Proof of Lemma \ref{propak}}
{\textcolor{black}{
Since the purpose of this lemma is to control the difference
\[
\mathbb{P}\!\left(
\max_{1 \le j \le P_n}
\min_{1 \le i \le n}
\|X_i - c_j\| \le u_n
\right)
-
\prod_{j=1}^{P_n}
\left(1 - e^{-a_j}\right),
\]
where $a_j = n\, \mathbb{P}(\|X_1 - c_j\| \le u_n)$, 
we apply the inclusion--exclusion principle to expand
\[
\mathbb{P}\!\left(
\max_{1 \le j \le P_n}
\min_{1 \le i \le n}
\|X_i - c_j\| \le u_n
\right)
\]
and a generalization of the binomial formula to expand
\[
\prod_{j=1}^{P_n}
\left(1 - e^{-a_j}\right).
\]
We then compare the terms of the two expansions one by one.
}}
For this define, for $1\leq j\leq P_n$, the event 
$$
A_j=\left\{\min_{1\leq i\leq n}\|X_i-c_j\|> u_n\right\}.
$$
Clearly,
$$
\mathbb{P}\left(\max_{1\leq j\leq P_n}\min_{1\leq i\leq n}\|X_i-c_j\|> u_n\right) 
= \mathbb{P}\left(A_1\cup\cdots \cup A_{P_n}\right).
$$
Hence, on the one hand, using the inclusion-exclusion principle,
\begin{eqnarray}\label{poincare}
&& \mathbb{P}\left(\max_{1\leq j\leq P_n}\min_{1\leq i\leq n}\|X_i-c_j\|\leq u_n\right) \nonumber \\
&& = 1- \mathbb{P}(A_1\cup\cdots \cup A_{P_n}) \nonumber \\
&& = 1 - \sum_{j=1}^{P_n} \mathbb{P}(A_j)
      - \sum_{k=2}^{P_n} (-1)^{k+1} 
        \sum_{1\leq i_1<i_2<\cdots<i_k\leq P_n} 
        \mathbb{P}\bigl(A_{i_1}\cap \cdots \cap A_{i_k}\bigr).
\end{eqnarray}

On the other hand, we have, using a generalization of the Newton binomial formula,
\begin{eqnarray}\label{newton}
&& \prod_{j=1}^{P_n}\left(1-e^{-a_j}\right)
= 1-\sum_{j=1}^{P_n} e^{-a_j}
- \sum_{k=2}^{P_n} (-1)^{k+1} 
    \sum_{1\leq i_1<i_2<\cdots<i_k\leq P_n}
    e^{-\sum_{j=1}^k a_{i_j}},
\end{eqnarray}
where
$$
a_j = n\, \mathbb{P}\bigl(\|X_1-c_j\|\le u_n\bigr).
$$

Our purpose is to compare (\ref{poincare}) and (\ref{newton}).  
{\textcolor{black}{
For this purpose, we control
\[
I := \sum_{j=1}^{P_n} \mathbb{P}(A_j) 
      - \sum_{j=1}^{P_n} e^{-a_j}
\]
and
\[
II := \sum_{k=2}^{P_n} (-1)^{k+1} 
      \sum_{1 \le i_1 < \cdots < i_k \le P_n} 
      \mathbb{P}\!\left(A_{i_1} \cap \cdots \cap A_{i_k}\right)
      - \sum_{k=2}^{P_n} (-1)^{k+1} 
      \sum_{1 \le i_1 < \cdots < i_k \le P_n}
      e^{-\sum_{j=1}^{k} a_{i_j}}.
\]
}}
Since, by independence,
$$
\mathbb{P}(A_j)
= \bigl(1- \mathbb{P}(X_1\in B(c_j, u_n))\bigr)^n,
$$
and using the trivial bound for any negative real $y$, 
$$
|e^y - 1|\leq |y|,
$$
and the inequality $\ln(1-x)+x \le 0$ valid for all real $x \in (0,1)$, we obtain since
\begin{eqnarray}\label{e1} 
&& |I|\leq \sum_{j=1}^{P_n}\bigl|\mathbb{P}(A_j)- \exp\bigl({-n \mathbb{P}\left(X_1\in B(c_j, u_n)\right)}\Bigr)\Bigr| \nonumber \\
&& = \sum_{j=1}^{P_n}
    \bigl| \exp\Bigl({\,n \ln(1-\mathbb{P}(X_1\in B(c_j, u_n)))}\Bigr) 
    -  \exp\bigl({-n \mathbb{P}\left(X_1\in B(c_j, u_n)\right)}\bigr)\bigr| \nonumber \\
&& = \sum_{j=1}^{P_n} 
    e^{-n \mathbb{P}(X_1\in B(c_j, u_n))}\,
    \Bigl| 
      \exp\Bigl({\,n \Bigl(\ln(1-\mathbb{P}(X_1\in B(c_j, u_n))) 
      + \mathbb{P}(X_1\in B(c_j, u_n))\Bigr)}\Bigr) - 1
    \Bigr|, \nonumber \\
&& \leq \sum_{j=1}^{P_n} 
    e^{-n \mathbb{P}(X_1\in B(c_j, u_n))}\,
    \biggl| 
      n \ln\bigl(1-\mathbb{P}(X_1\in B(c_j, u_n))\bigr)
      + n\,\mathbb{P}(X_1\in B(c_j, u_n))
    \biggr|.
\end{eqnarray}

Recall that for any $x\in (0,1)$,
\begin{equation}\label{ln}
-x - \frac{x^2}{2(1-x)} \le \ln(1-x) \le -x.
\end{equation}
Taking $x = \mathbb{P}(X_1\in B(c_j, u_n))$ in (\ref{e1}) gives
\begin{eqnarray}\label{e2}
&& \sum_{j=1}^{P_n}
\bigl|\mathbb{P}(A_j)- e^{-n \mathbb{P}(X_1\in B(c_j, u_n))}\bigr| 
\nonumber \\
&& \le \frac{n}{2} \sum_{j=1}^{P_n}
    e^{-n \mathbb{P}(X_1\in B(c_j, u_n))}\,
    \frac{\bigl(\mathbb{P}(X_1\in B(c_j, u_n))\bigr)^2}
         {1 - \mathbb{P}(X_1\in B(c_j, u_n))}.
\end{eqnarray}

Now, under the $(a',a,b)$-assumption, it holds for all $1\le j\le P_n$,
$$
a\, u_n^b \le \mathbb{P}(X_1\in B(c_j, u_n)) 
\le a'\, u_n^b.
$$
Hence,
\begin{eqnarray*}
&& \frac{n}{2}\sum_{j=1}^{P_n} 
    e^{-n \mathbb{P}(X_1\in B(c_j, u_n))}\,
    \frac{\bigl(\mathbb{P}(X_1\in B(c_j, u_n))\bigr)^2}
         {1 - \mathbb{P}(X_1\in B(c_j, u_n))} \\
&& \le \frac{a^{\prime 2}\, n\, u_n^{2b}}{2\bigl(1 - a^{\prime}\, u_n^b\bigr)} 
     \sum_{j=1}^{P_n} e^{-n \mathbb{P}(X_1\in B(c_j, u_n))}.
\end{eqnarray*}
Therefore, by (\ref{e2}),
\begin{eqnarray}\label{e2bis}
&& \sum_{j=1}^{P_n}
\bigl|\mathbb{P}(A_j)- e^{-n \mathbb{P}(X_1\in B(c_j, u_n))}\bigr| 
\le \frac{a^{\prime 2}\, n\, u_n^{2b}}{2(1 - a^{\prime}\, u_n^b)}\,
\sum_{j=1}^{P_n} e^{-n \mathbb{P}(X_1\in B(c_j, u_n))}.
\end{eqnarray}

Our goal is to check that
$$
\limsup_{n\rightarrow \infty} 
\sum_{j=1}^{P_n} e^{-n \mathbb{P}(X_1\in B(c_j, u_n))} < \infty.
$$
Using the $(a,b)$-standard assumption and the bounds ((\ref{bound1}), (\ref{bound2})), we obtain,
\begin{eqnarray*}
&& \sum_{j=1}^{P_n} e^{-n \mathbb{P}(X_1\in B(c_j, u_n))} 
\le P_n\, e^{-n\, a\, u_n^b} \le P_n\, e^K  \frac{\ln n}{n}
\le \text{cst.}
\end{eqnarray*}
Thus, since $\lim_{n\to\infty} u_n^b=0$, we get from (\ref{e2bis}) and (\ref{bound1}),
\begin{eqnarray}\label{A3}
&& |I|\leq \sum_{j=1}^{P_n}
\bigl|\mathbb{P}(A_j)- e^{-n \mathbb{P}(X_1\in B(c_j, u_n))}\bigr|
= {O}\bigl(n\, u_n^{2b}\bigr)= {O}\bigl(\frac{\ln^2 n}{n}\bigr).
\end{eqnarray}

Our next goal is to control the quantity
\begin{eqnarray*}
&& |II|=\left|\sum_{k=2}^{P_n} (-1)^{k+1}
\sum_{1\le i_1<\cdots<i_k\le P_n}
\Bigl[
\mathbb{P}\bigl(A_{i_1}\cap \cdots \cap A_{i_k}\bigr)
-
\exp\Bigl(-n \sum_{j=1}^{k} 
\mathbb{P}\bigl(\|X_1-c_{i_j}\|\le u_n\bigr)\Bigr)
\Bigr]
\right|.
\end{eqnarray*}

By independence of the variables, and since the balls 
$\bigl(B(c_{i_j}, u_n)\bigr)_{1\le j\le P_n}$ are disjoint,
\begin{eqnarray*}
&& \mathbb{P}\bigl(A_{i_1}\cap \cdots \cap A_{i_k}\bigr) \\
&& = \bigl(\mathbb{P}\bigl(\|X_1-c_{i_1}\|>u_n,\ldots,\|X_1-c_{i_k}\|>u_n\bigr)\bigr)^n \\
&& = \Bigl(1 - \mathbb{P}\bigl(X_1\in \bigcup_{j=1}^{k} B(c_{i_j}, u_n)\bigr)\Bigr)^n \\
&& = \biggl(1 - \sum_{j=1}^{k} \mathbb{P}\bigl(X_1\in B(c_{i_j}, u_n)\bigr)\biggr)^n.
\end{eqnarray*}

Set
$$
x_k = \sum_{j=1}^{k} \mathbb{P}\bigl(\|X_1 - c_{i_j}\|\le u_n\bigr)
= \mathbb{P}\biggl(\bigcup_{j=1}^{k}
\bigl\{\|X_1-c_{i_j}\|\le u_n\bigr\}\biggr).
$$
By the $(a',a,b)$-assumption,
\begin{equation}\label{xk}
a\, k\, u_n^b \le x_k \le a'\, k\, u_n^b,\quad {\mbox{also}}\,\,\,
x_k \in [0,1].
\end{equation}

Hence
\begin{eqnarray}\label{xkbis}
&& \biggl|
\mathbb{P}\bigl(A_{i_1}\cap \cdots \cap A_{i_k}\bigr)
- \exp\Bigl(-n \sum_{j=1}^{k}
\mathbb{P}\bigl(\|X_1 - c_{i_j}\|\le u_n\bigr)\Bigr)
\biggr| {\nonumber}\\
&& = \bigl|(1 - x_k)^n - e^{-n x_k}\bigr|  {\nonumber}\\
&& \le e^{-n x_k}\,
     \bigl|\exp\bigl(n \ln(1 - x_k) + n x_k\bigr) - 1 \bigr| 
     \cdot \mathbf{1}_{\{x_k\ne 1\}}
     + e^{-n}\,\mathbf{1}_{\{x_k=1\}}.
\end{eqnarray}

We use the following bound, which is a direct consequence of (\ref{ln}):
\begin{eqnarray*}
&& e^{-n x_k}\, e^{-n\, \frac{x_k^2}{2(1 - x_k)}}
\le e^{\,n \ln(1 - x_k)} \le e^{-n x_k}.
\end{eqnarray*}
Hence, when $x_k\neq 1$,
\begin{eqnarray*}
&& e^{-n \frac{x_k^2}{2(1 - x_k)}}
\le e^{\,n \ln(1 - x_k) + n x_k} \le 1.
\end{eqnarray*}
So that
$$
0 \le 1 - e^{\,n \ln(1 - x_k) + n x_k}
\le 1 - e^{-n\, \frac{x_k^2}{2(1 - x_k)}}
\le \min\biggl(1, \, n\, \frac{x_k^2}{2(1 - x_k)}\biggr).
$$
The last bound together with (\ref{xkbis}) and (\ref{xk}), gives
\begin{eqnarray*}
&& \biggl|
(1 - x_k)^n 
- e^{-n x_k}
\biggr| \\
&& \le e^{- a\, n\, k\, u_n^b}\,
      \min\biggl(
        \frac{n\, x_k^2}{2\bigl(1 - x_k\bigr)},
        1
      \biggr)\,\mathbf{1}_{\{x_k\ne 1\}} 
     + e^{-n}\,\mathbf{1}_{\{x_k=1\}} \\
&& \le \text{cst.}\, e^{-a\, n\, k\, u_n^b}\,
      \min\biggl(
        \frac{n\, k^2\, u_n^{2b}}{2\bigl(1 - \min(1,\, k\, u_n^b\, a')\bigr)},
        1
      \biggr)
      + e^{-n}.
\end{eqnarray*}

Consequently, from the last bound  together with (\ref{xkbis}) and (\ref{bound1}), we obtain, 
\begin{eqnarray*}
&& \left|
\sum_{k=2}^{P_n} (-1)^{k+1}
\sum_{1\le i_1<\cdots<i_k\le P_n}
\biggl[
\mathbb{P}\bigl(A_{i_1}\cap \cdots \cap A_{i_k}\bigr)
- \exp\Bigl(-n \sum_{j=1}^{k}
\mathbb{P}\bigl(\|X_1 - c_{i_j}\|\le u_n\bigr)\Bigr)
\biggr]
\right| \\
&& 
\sum_{k=2}^{P_n} \binom{P_n}{k}\,
\biggl|
\mathbb{P}\bigl(A_{i_1}\cap \cdots \cap A_{i_k}\bigr)
- \exp\Bigl(-n \sum_{j=1}^{k}
\mathbb{P}\bigl(\|X_1 - c_{i_j}\|\le u_n\bigr)\Bigr)
\biggr|\\
&& \le \text{cst.}\sum_{k=2}^{P_n}
     \binom{P_n}{k}\,
     e^{-a\, n\, k\, u_n^b}\,
     \min\biggl(
       \frac{n\, k^2\, u_n^{2b}}{2\bigl(1 - \min(1,\, k\, u_n^b\, a')\bigr)},
       1
     \biggr) 
     + e^{-n}\, \sum_{k=2}^{P_n} \binom{P_n}{k} \\
&& \le \text{cst.}\sum_{k=2}^{P_n}
     \binom{P_n}{k}\,\left(\frac{\ln n}{n}\right)^k
    \,
     \min\biggl(
       \frac{n\, k^2\, u_n^{2b}}{2\bigl(1 - \min(1,\, k\, u_n^b\, a')\bigr)},
       1
     \biggr) 
     + e^{-n}\, {2}^{P_n}.     
\end{eqnarray*}

Using a Stirling-type bound bound on binomial coefficients together with (\ref{bound2}),
$$
\binom{P_n}{k} \le \left(\frac{e\, P_n}{k}\right)^k \leq \left(\frac{c\, n}{k\ln (n)}\right)^k,
$$
for some positive constant $c$, we obtain, 
\begin{eqnarray}\label{BBound}
&& \left|
\sum_{k=2}^{P_n} (-1)^{k+1}
\sum_{1\le i_1<\cdots<i_k\le P_n}
\biggl[
\mathbb{P}\bigl(A_{i_1}\cap \cdots \cap A_{i_k}\bigr)
- \exp\Bigl(-n \sum_{j=1}^{k}
\mathbb{P}\bigl(\|X_1 - c_{i_j}\|\le u_n\bigr)\Bigr)
\biggr]
\right| {\nonumber}\\
&& \le \text{cst.}\sum_{k=2}^{P_n}
     \, \left(\frac{c}{k}\right)^k
    \,
     \min\biggl(
       \frac{n\, k^2\, u_n^{2b}}{2\bigl(1 - \min(1,\, k\, u_n^b\, a')\bigr)},
       1
     \biggr) 
     + {\mbox{cst.}}\,\, e^{-n/2}.     
\end{eqnarray}
Now (\ref{bound1}) gives,  for $n$ large enough,
\begin{eqnarray*}
&& \sum_{k=2}^{P_n}
\left(\frac{c}{k}\right)^k
\min\biggl(
  \frac{n\, k^2\, u_n^{2b}}{2\bigl(1 - \min(1,\, k\, u_n^b\, a')\bigr)},
  1
\biggr) \\
&& \le \frac{\ln^2 n}{n}
    \sum_{k=1}^{\lfloor \sqrt{n}/\ln n \rfloor}
    \left(
      \frac{c}{k}
    \right)^k
    k^2 \quad + \sum_{k=\lfloor \sqrt{n}/\ln n \rfloor}^{P_n}
    \left(
      \frac{c}{k}
    \right)^k.
\end{eqnarray*}
We have,
\begin{eqnarray*}
    && \sum_{k=1}^{\lfloor \sqrt{n}/\ln n \rfloor}
\left(
   \frac{c}{k}
\right)^k
k^2
\le \text{cst.} + \sum_{k=[c]+1}^{\lfloor \sqrt{n}/\ln n \rfloor}
\left(
   \frac{c}{[c]+1}
\right)^k
k^2 \le \text{cst.}
\end{eqnarray*}
Therefore,
$$
\frac{\ln^2 n}{n}\sum_{k=1}^{\lfloor \sqrt{n}/\ln n \rfloor}
\left(\frac{c}{k}\right)^k
k^2 = {\mathcal O}\left(\frac{\ln^2 n}{n}\right).
$$
Moreover,  we have
\begin{eqnarray*}
&& \sum_{k=\lfloor \sqrt{n}/\ln n \rfloor}^{P_n}\left(
  \frac{c}{k}
\right)^k \le
\sum_{k=\lfloor \sqrt{n}/\ln n \rfloor}^{P_n}\exp\bigl(-k \ln(k/c)\bigr) ={\mathcal O}\left(\exp(-\frac{\sqrt{n}}{2})\right).
\end{eqnarray*}
Hence, combining all previous bounds, we finally deduce
\begin{equation*}
\sum_{k=2}^{P_n}
\left(\frac{c}{k}\right)^k
\min\left(
  \frac{n\, k^2\, u_n^{2b}}{2(1 - \min(1,\, k\, u_n^b\, a'))},
  1
\right)
\le \text{cst.} \frac{\ln^2 n}{n}.
\end{equation*}
We finally conclude thanks to (\ref{BBound})
\begin{eqnarray*}
&& \left|\sum_{k=2}^{P_n}
(-1)^{k+1}
\sum_{1\le i_1<\cdots<i_k\le P_n}
\mathbb{P}\bigl(A_{i_1}\cap \cdots \cap A_{i_k}\bigr)
-
\exp\left(-n \sum_{j=1}^{k}
\mathbb{P}\bigl(\|X_1 - c_{i_j}\|\le u_n\bigr)\right)
\right| \\
&&
= {{O}}\left(\frac{\ln^2 n}{n}\right)
\end{eqnarray*}

Together with (\ref{A3}) ((\ref{poincare}) and (\ref{newton})), this implies the desired approximation
\begin{equation*}
\mathbb{P}\left(
\max_{1\le j\le P_n}
\min_{1\le i\le n}
\|X_i - c_j\| \le u_n
\right)
=
\prod_{j=1}^{P_n}
\left(1 - e^{-a_j}\right) 
+ {{ O}}\left(\frac{\ln^2 n}{n}\right),
\end{equation*}
where $a_j = n\, \mathbb{P}(\|X_1 - c_j\|\le u_n)$.
This completes the proof of Lemma \ref{propak}.

\subsection{Proof of Proposition \ref{main}}
Proposition~\ref{main} is an immediate consequence of Lemma \ref{propak} together with the following lemma.

\begin{lem}\label{propak1}
Let $X_1$ be a compactly supported random variable satisfying 
the requirement of Definition \ref{order}. Let $\mathbb M$ denote its compact support. Let $(u_n)_n$ be a sequence of positive real numbers satisfying Condition~\eqref{conditionun}, and let $(c_i)_{1\le i \le P_n}$ be a $u_n$-packing of $\mathbb M$.
Then,
\[
\lim_{n \to \infty}\left|  
\prod_{j=1}^{P_n} \Bigl( 1 - \exp\bigl( -n \, \mathbb{P}\left(\|X_1 - c_j\| \le u_n\right) \bigr) \Bigr)
- \exp\left(- P_n \exp\bigl(-n u_n^{\,b}\, a\bigr)\right)
\right| = 0.
\]
\end{lem}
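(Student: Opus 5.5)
We will compare the two quantities through their logarithms. Set $p_j=\mathbb{P}(\|X_1-c_j\|\le u_n)$ and $q_n=a\,u_n^b$. By Definition~\ref{order}, uniformly in $j$,
\[
|n p_j - n q_n|\le d\, n\, u_n^{b+\alpha}.
\]
By (\ref{bound1}), $n u_n^b=O(\ln n)$ and $u_n^\alpha=O\bigl((\ln n/n)^{\alpha/b}\bigr)$. Hence $\delta_n:=\max_j|np_j-nq_n|=O\bigl((\ln n)^{1+\alpha/b}n^{-\alpha/b}\bigr)\to 0$. It follows that $e^{-np_j}=e^{-nq_n}(1+\eta_{j,n})$ with $\max_j|\eta_{j,n}|\le e^{\delta_n}-1\to 0$.

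Write $y_j=e^{-np_j}$ and $y=e^{-nq_n}$. Since $y\le e^K\ln n/n\to 0$ by (\ref{bound1}), all the $y_j$ lie in $[0,1/2]$ for $n$ large. We then expand
\[
\ln\prod_{j=1}^{P_n}(1-y_j)=-\sum_j y_j - R_n,\qquad 0\le R_n\le \sum_j y_j^2 ,
\]
using $-x-x^2\le\ln(1-x)\le -x$ on $[0,1/2]$. Each of the three resulting errors is controlled as follows.

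First, $\bigl|\sum_j y_j-P_n y\bigr|\le P_n y\max_j|\eta_{j,n}|$. By (\ref{bound2}) and (\ref{bound1}), $P_n y\le \mathrm{cst}\,(n/\ln n)\,(\ln n/n)=\mathrm{cst}$, so this error is $O(e^{\delta_n}-1)\to0$. Second, $R_n\le \max_j y_j\sum_j y_j\le \mathrm{cst}\,(\ln n/n)\to0$. Third, $|e^{s}-e^{t}|\le|s-t|$ for $s,t\le 0$, and both exponents $-\sum_j y_j-R_n$ and $-P_ny$ are nonpositive. The difference of the product and $\exp(-P_n e^{-a n u_n^b})$ is therefore bounded by the sum of these two errors, which tends to $0$.

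The only delicate point is the first step: the error in Definition~\ref{order} must be negligible after multiplication by $n$ inside the exponent. This is exactly where the polynomial rate $u_n^{b+\alpha}$ combined with $u_n\asymp(\ln n/n)^{1/b}$ is used. The merely two-sided $(a',a,b)$ bounds would not suffice here, since they only give $e^{-np_j}$ within constant factors of $y$. The other essential point is that $P_ny$ stays bounded, so the relative errors $\eta_{j,n}$ do not accumulate over the $P_n$ terms; this uses the packing bound (\ref{bound2}) from Lemma~\ref{lempacking}.
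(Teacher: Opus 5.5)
Your proof is correct and is essentially the paper's argument. It uses the same three ingredients: the rate $d\,n\,u_n^{b+\alpha}\to 0$ to replace $\mathbb{P}(\|X_1-c_j\|\le u_n)$ by $a u_n^b$; the boundedness of $P_n e^{-a n u_n^b}$ from (\ref{bound1})--(\ref{bound2}); and a quadratic error of order $P_n e^{-2anu_n^b}=O(\ln n/n)$. You organize it through $\ln\prod_j(1-y_j)$ and the $1$-Lipschitz property of $\exp$ on $(-\infty,0]$, whereas the paper applies $|\prod_j u_j-\prod_j v_j|\le\sum_j|u_j-v_j|$ twice via the intermediate $(1-e^{-anu_n^b})^{P_n}$. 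One small advantage of your two-sided factor $e^{\delta_n}$ is that it does not need $\mathbb{P}(\|X_1-c_j\|\le u_n)\ge a u_n^b$; the paper invokes this, but Definition~\ref{order} only gives it up to the error $d\,u_n^{b+\alpha}$.
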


\subsubsection{Proof of Lemma \ref{propak1}}

We start by using the following two bounds:
\begin{equation}\label{bound3}
\left| \prod_{j=1}^m u_j - \prod_{j=1}^m v_j \right|
\;\leq\; \sum_{j=1}^m |u_j - v_j|,
\end{equation}
which holds for any positive integer $m$ and for any $0 \leq u_j, v_j \leq 1$, and
\[
|e^{-x} - e^{-y}| \;\le\; e^{-x} \, |x - y|,
\]
for positive $x$ and $y$ satisfying $y \ge x$.
Recall also that, by the $(a,b)$-standard assumption,
\[
\mathbb{P}\bigl(X_1 \in B(c_j, u_n)\bigr) \;\ge\; a\, u_n^b.
\]

Therefore,
\begin{eqnarray*}
&& \left|
\prod_{j=1}^{P_n} \bigl(1 - e^{-n\, \mathbb{P}(X_1 \in B(c_j, u_n))}\bigr)
-
\prod_{j=1}^{P_n} \bigl(1 - \exp(-n\, u_n^b\, a)\bigr)
\right| \\
&& \le
\sum_{j=1}^{P_n}
\left|
\exp\bigl(-n\, u_n^b\, a\bigr)
- e^{-n\, \mathbb{P}(X_1 \in B(c_j, u_n)) }
\right| \\
&& \le
n \sum_{j=1}^{P_n}
\exp\bigl(-n\, u_n^b\, a\bigr)
\bigl|
u_n^b\, a
- \mathbb{P}(X_1 \in B(c_j, u_n))
\bigr|.
\end{eqnarray*}

From Definition \ref{order}, we have for all $1 \le j \le P_n$:
\[
\bigl|
\mathbb{P}(X_1 \in B(c_j, u_n)) - a\, u_n^b
\bigr|
\;\le\;
d\, u_n^{b+\alpha}.
\]

Therefore, using bounds (\ref{bound1}) and (\ref{bound2}), we obtain
\begin{eqnarray*}
&& \left|
\prod_{j=1}^{P_n}
\bigl(1 - e^{-n\, \mathbb{P}(X_1 \in B(c_j, u_n))}\bigr)
-
\prod_{j=1}^{P_n}
\bigl(1 - \exp(-n\, u_n^b\, a)\bigr)
\right| \\
&& \le
d\, n\, u_n^{b+\alpha}\, P_n \, \exp\bigl(-n\, u_n^b\, a\bigr) \\
&& \le
\text{cst.} \; n\, u_n^{\alpha}\, \frac{\ln n}{n}
\;=\;
\text{cst.} \; (\ln n)\, u_n^{\alpha} \\
&& \le
\text{cst.} \; (\ln n)\,\left(\frac{\ln n}{n}\right)^{\alpha/b}.
\end{eqnarray*}

We deduce that, since $\alpha > 0$,
\begin{equation}\label{lim1}
\lim_{n \to \infty}
\left|
\prod_{j=1}^{P_n}
\bigl(1 - e^{-n\, \mathbb{P}(X_1 \in B(c_j, u_n))}\bigr)
-
\bigl(1 - \exp(-n\, u_n^b\, a)\bigr)^{P_n}
\right|
=
0.
\end{equation}

Next, applying (\ref{bound3}), we have
\begin{eqnarray*}
&& \left|
\bigl(1 - \exp(-n\, u_n^b\, a)\bigr)^{P_n}
-
\exp\bigl(-P_n\, \exp(-n\, u_n^b\, a)\bigr)
\right| \\
&& \le
P_n \;
\left|
1 - \exp(-n\, u_n^b\, a)
- \exp\bigl(-\exp(-n\, u_n^b\, a)\bigr)
\right|.
\end{eqnarray*}

Recall that for any $x \in [0,1)$,
\[
\bigl|\, 1 - x - e^{-x} \bigr|
\;\le\;
\frac{x^2}{2\, (1 - x)}.
\]

Hence, taking $x = \exp(-n\, u_n^b\, a)$ in the inequality above, we obtain
\[
\left|
1 - \exp(-n\, u_n^b\, a)
- \exp\bigl(-\exp(-n\, u_n^b\, a)\bigr)
\right|
\;\le\;
\frac{\exp(-2\, n\, u_n^b\, a)}{2 \bigl( 1 - \exp(-n\, u_n^b\, a)\bigr )}.
\]

Therefore, by (\ref{bound1}) and (\ref{bound2}),
\begin{eqnarray*}
&& \left|
\bigl(1 - \exp(-n\, u_n^b\, a)\bigr)^{P_n}
-
\exp\bigl(-P_n\, \exp(-n\, u_n^b\, a)\bigr)
\right| \\
&& \le
P_n \; \frac{\exp(-2\, n\, u_n^b\, a)}{2 \bigl( 1 - \exp(-n\, u_n^b\, a)\bigr )} \\
&& \le
\text{cst.} \;
\frac{n}{\ln n} \times
\frac{\bigl( \ln n / n \bigr)^2}{1 - \text{cst.} \; \ln n / n} \\
&& \le
\text{cst.} \;
\frac{\ln n}{n}.
\end{eqnarray*}

Consequently,
\begin{equation}\label{lim2}
\lim_{n \to \infty}
\left|
\bigl(1 - \exp(-n\, u_n^b\, a)\bigr)^{P_n}
-
\exp\bigl( - P_n\, \exp(-n\, u_n^b\, a) \bigr)
\right|
=
0.
\end{equation}

The proof of Lemma \ref{propak1} is thus complete, by combining (\ref{lim1}) and (\ref{lim2}).

\subsection{Proof of Corollary \ref{main2}}
Our first goal is to prove that the sequence \((U_n(u))\), defined in (\ref{lambert}), satisfies condition (\ref{conditionun}).
It is well known (\cite{Corless}) that the Lambert function $W$ satisfies,
\[
W(y) - \ln(y) + \ln(\ln(y)) 
= \frac{\ln(\ln(y))}{\ln(y)} + o\left(\frac{\ln(\ln(y))}{\ln(y)}\right),
\]
as \(y\) tends to infinity. 
Consequently, using (\ref{lambert}), we obtain
\[
\lim_{n \rightarrow \infty} \left| n\, a\, U_n^b(u) - \left[ \ln(n) - \ln(\ln(n)) + u + \ln\bigl(a\, c(\mathbb M)\bigr) \right] \right| = 0.
\]
This implies that the sequence 
\[
\left| n\, a\, U_n^b(u) - \bigl(\ln(n) - \ln(\ln(n))\bigr) \right|
\]
is bounded, and condition (\ref{conditionun}) is therefore satisfied. 
Hence, we may apply Proposition \ref{main}, yielding
\begin{equation}\label{lambert2}
\lim_{n \rightarrow \infty}
\left|
\mathbb{P}\left( \max_{1 \le j \le P_n} \min_{1 \le i \le n} \| X_i - c_j \| \le U_n(u) \right)
- 
\exp\left(- P_n \exp\bigl(-n\, U_n^b(u)\, a \bigr)\right)
\right|
= 0.
\end{equation}
Our next goal is to prove that, under Assumption ~\hyperref[assumpP]{${(\mathcal P)}$},
\begin{equation}\label{lambert1}
\lim_{n \rightarrow \infty}
\exp\left(- P_n \exp\bigl(-n\, U_n^b(u)\, a\bigr)\right)
=
\exp\bigl(-\exp(-u)\bigr).
\end{equation}
Using Assumption~\hyperref[assumpP]{${(\mathcal P)}$}, we have
\begin{eqnarray}\label{bound4}
&& P_n \exp\bigl(-n\, U_n^b(u)\, a\bigr) = \exp(\ln(P_n))\exp\bigl(-n\, U_n^b(u)\, a\bigr) \nonumber \\
&& = \exp\bigl(\frac{\ln(P_n)}{\ln(c(\mathbb M)\, U_n^{-b}(u))+ o(1)}\bigr)\exp\bigl({\ln(c(\mathbb M)\, U_n^{-b}(u))+ o(1)}\bigr)\exp\bigl(-n\, U_n^b(u)\, a\bigr) \nonumber \\
&& = \exp\bigl(\frac{\ln(P_n)}{\ln(c(\mathbb M)\, U_n^{-b}(u))+ o(1)}\bigr) c(\mathbb M)\, U_n^{-b}(u)) \exp\bigl(-n\, U_n^b(u)\, a + o(1)\bigr)
\end{eqnarray}
Moreover, recall that \(W(y)\, e^{W(y)} = y\) for any positive \(y\). Thus,
\begin{eqnarray*}
&&
c(\mathbb M)\, U_n^{-b}(u) \exp\bigl(-n\, U_n^b(u)\, a\bigr) \\
&&=
c(\mathbb M)\, U_n^{-b}(u)
\exp\left(
- n\, U_n^b(u)\, a
+ W\bigl( n\, a\, c(\mathbb M)\, e^u \bigr)
\right)
\exp\left(- W\bigl( n\, a\, c(\mathbb M)\, e^u \bigr)\right) \\
&&=
c(\mathbb M)\, U_n^{-b}(u)\, \exp\left(
- n\, U_n^b(u)\, a
+ W\bigl( n\, a\, c(\mathbb M)\, e^u \bigr)
\right)
\frac{W\bigl( n\, a\, c(\mathbb M)\, e^u \bigr)}{n\, a\, c(\mathbb M)\, e^u}
 \\
&&\sim 
\frac{
W\bigl( n\, a\, c(\mathbb M)\, e^u \bigr)
}{
n\, a\, U_n^b(u)
}\,
e^{-u},
\end{eqnarray*}
as $n$ tends to infinity by (\ref{lambert}).
It is known that \(\lim_{n \rightarrow \infty} W\bigl( n\, a\, c(\mathbb M)\, e^u \bigr) = +\infty\). Consequently, from (\ref{lambert}) we obtain
\[
\lim_{n \rightarrow \infty}
\frac{
n\, a\, U_n^b(u)
}{
W\bigl( n\, a\, c(\mathbb M)\, e^u \bigr)
}
= 1,
\]
and hence
\begin{equation}\label{bound5}
\lim_{n \rightarrow \infty}
c(\mathbb M)\, U_n^{-b}(u) \exp\bigl(-n\, U_n^b(u)\, a\bigr)
=
e^{-u}.
\end{equation}
Combining this with (\ref{bound4}) and Assumption~\hyperref[assumpP]{${(\mathcal P)}$}, we deduce that
$$
\lim_{n \rightarrow \infty}P_n \exp\bigl(-n\, U_n^b(u)\, a\bigr)= e^{-u}
$$
and therefore the limit (\ref{lambert1}) is proved.
The proof of Corollary \ref{main2} is thus complete, combining (\ref{lambert2}) and (\ref{lambert1}).

\subsection{Proof of Theorem \ref{promain}}
Let $u \in \mathbb{R}$ be fixed. Define
\[
U_n(u) = \alpha_n u + \beta_n,
\]
where the sequences $(\alpha_n)_n$ and $(\beta_n)_n$ are as defined in the statement of Theorem \ref{promain}. 
We have
\begin{align*}
n^{1/b} \, a^{1/b} \, U_n(u) 
&= (\ln(n))^{1/b} \left(1 + \frac{1}{b \, \ln(n)} \bigl( u + \ln(a \, c(\mathbb M)) - \ln(\ln(n)) \bigr)\right).
\end{align*}

Since 
\[
(1+x)^b = 1 + b\,x + O(x^2), 
\]
as $x$ tends to $0$, we obtain
\begin{align*}
n\, a \, U_n^b(u) 
&= (\ln(n)) \left( 1 + \frac{1}{b \, \ln(n)} \bigl( u + \ln(a \, c(\mathbb M)) - \ln(\ln(n)) \bigr) \right)^b \\
&= \ln(n) - \ln(\ln(n)) + u + \ln(a \, c(\mathbb M)) + O\left(\frac{1}{\ln n}\right).
\end{align*}

Finally,
\[
\lim_{n \to \infty} \left| n a \, U_n^b(u) - W\bigl( n\, a\, c(\mathbb M)\, e^{u} \bigr) \right| = 0.
\]
Condition (\ref{lambert}) is then satisfied. 
Let $c_1, \dots, c_{P_n}$ be a maximal $U_n(u)$-packing of $\mathbb M$. Then, by Corollary \ref{main2}, we have
\[
\lim_{n \to \infty} \mathbb{P}\left( \max_{1 \le j \le P_n} \min_{1 \le i \le n} \| X_i - c_j \| \le U_n(u) \right)
= \exp\bigl( - \exp(-u) \bigr),
\]
or equivalently
$$
\lim_{n \rightarrow \infty} 
\mathbb{P}\left(
\bigl\{c_1,\cdots, c_{P_n}\bigr\} \subset \bigcup_{i=1}^n B(X_i, \alpha_n\,u+ \beta_n)\right)
=
\exp\bigl(- \exp(-u)\bigr).
$$
The proof of Theorem \ref{promain} is complete.
\subsection{Proof of Corollary \ref{maintheo}}
We should apply Theorem \ref{promain}.
Let $u \in \mathbb{R}$ be fixed. Define
$
U_n(u) = \alpha_n u + \beta_n,
$
where the sequences $(\alpha_n)_n$ and $(\beta_n)_n$ are as defined in the statement of Theorem \ref{promain}.
Clearly, the event 
$$
\bigl\{c_1,\cdots, c_{P_n}\bigr\} \subset \bigcup_{i=1}^n B(X_i, \alpha_n\,u+ \beta_n)
$$
implies that,
$$
\bigcup_{j=1}^{P_n} B(c_j, \alpha_n\,u+ \beta_n) \subset \bigcup_{i=1}^n B(X_i, 2(\alpha_n\,u+ \beta_n)).
$$
So that,
\begin{eqnarray*}    
&&\mathbb P\left(\bigl\{c_1,\cdots, c_{P_n}\bigr\} \subset \bigcup_{i=1}^n B(X_i, \alpha_n\,u+ \beta_n)\right)  \\
&&\leq\mathbb P\left( \bigcup_{j=1}^{P_n} B(c_j, \alpha_n\,u+ \beta_n) \subset \bigcup_{i=1}^n B(X_i, 2(\alpha_n\,u+ \beta_n)) \right).
\end{eqnarray*}
Consequently by Theorem \ref{promain}, we obtain,
$$
\exp\bigl( - \exp(-u) \bigr)
\le
\liminf_{n \to \infty} \mathbb P\left( \bigcup_{j=1}^{P_n} B(c_j, \alpha_n\,u+ \beta_n) \subset \bigcup_{i=1}^n B(X_i, 2(\alpha_n\,u+ \beta_n)) \right).
$$
Now, since,
\[
\max_{1 \le j \le P_n} \min_{1 \le i \le n} \| X_i - c_j \| 
\le d_H(\bbx_n, \mathbb M) \quad \text{a.s.},
\]
we have
\[
\mathbb{P}\left( d_H(\bbx_n, \mathbb M) \le U_n(u) \right)
\le 
\mathbb{P}\left( \max_{1 \le j \le P_n} \min_{1 \le i \le n} \| X_i - c_j \| \le U_n(u) \right).
\]
Hence,
\[
\limsup_{n \to \infty} 
\mathbb{P}\left( d_H(\bbx_n, \mathbb M) \le U_n(u) \right)
\le \exp\bigl( - \exp(-u) \bigr).
\]

Our goal now is to prove that
\[
\exp\bigl( - \exp(-u) \bigr)
\le
\liminf_{n \to \infty} 
\mathbb{P}\left( d_H(\bbx_n, \mathbb M) \le 3 \, U_n(u) \right).
\]

For this, we first recall the following lemma, which is well-known in the literature (see, for instance, \cite{Eddie}).

\begin{lem}\label{lemtool}
Let $c_1, \dots, c_{P}$ be an $\varepsilon$-maximal packing of $\mathbb M$. Then
\[
\mathbb M \subset \bigcup_{j=1}^P B(c_j, 2\varepsilon).
\]
If $\bbx_n = \{ X_1, \dots, X_n \} \subset \mathbb M$, then
\[
d_H(\bbx_n, \mathbb M) 
\le 2 \varepsilon 
+ \max_{1 \le j \le P} \min_{1 \le i \le n} \| X_i - c_j \|.
\]
\end{lem}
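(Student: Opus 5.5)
The first claim is exactly the statement recalled just before Lemma~\ref{lempacking}, and we prove it by contradiction using maximality. Suppose some $x\in\mathbb M$ satisfies $\|x-c_j\|>2\varepsilon$ for every $1\le j\le P$. Then $\{c_1,\dots,c_P,x\}$ is still a family of points of $\mathbb M$ whose pairwise distances all exceed $2\varepsilon$. Equivalently, the ball $B(x,\varepsilon)$ is disjoint from every $B(c_j,\varepsilon)$. So we could add a ball of radius $\varepsilon$ centred in $\mathbb M$ without intersecting the others, which contradicts the maximality of the packing. Hence every $x\in\mathbb M$ satisfies $\|x-c_j\|\le 2\varepsilon$ for some $j$, that is, $\mathbb M\subset\bigcup_{j=1}^P B(c_j,2\varepsilon)$.

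For the second claim, we use the expression of the Hausdorff distance valid when $\bbx_n\subset\mathbb M$:
\[
d_H(\bbx_n,\mathbb M)=\sup_{x\in\mathbb M}\min_{1\le i\le n}\|x-X_i\|.
\]
Fix $x\in\mathbb M$. By the first part, there is an index $j(x)$ with $\|x-c_{j(x)}\|\le 2\varepsilon$. For each $i$, the triangle inequality gives
\[
\|x-X_i\|\le \|x-c_{j(x)}\|+\|c_{j(x)}-X_i\|\le 2\varepsilon+\|c_{j(x)}-X_i\|.
\]
Taking the minimum over $i$ gives
\[
\min_i\|x-X_i\|\le 2\varepsilon+\min_i\|X_i-c_{j(x)}\|\le 2\varepsilon+\max_{1\le j\le P}\min_i\|X_i-c_j\|.
\]
The right-hand side does not depend on $x$, so taking the supremum over $x\in\mathbb M$ yields the stated bound.

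There is no real obstacle in this argument. The only point requiring care is the convention in the definition of a packing: pairwise distances are strictly greater than $2\varepsilon$, so the covering radius comes out as the closed value $2\varepsilon$. This matches the closed balls $B(c_j,2\varepsilon)$ used throughout the paper.
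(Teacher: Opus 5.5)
Your proof is correct: maximality of the packing (whether read as inclusion-maximal or as having cardinality $P(\mathbb M,\varepsilon)$) forces every point of $\mathbb M$ to lie within $2\varepsilon$ of some $c_j$, and the triangle inequality then gives the Hausdorff bound. The paper does not prove this lemma itself; it cites it as well known (\cite{Eddie}, and Lemma 5.2 of \cite{smale} for the covering inclusion used in the proof of Lemma~\ref{lempacking}), so your argument supplies the standard proof behind those citations.
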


Let $c_1, \dots, c_{P_n}$ be a $U_n(u)$-maximal packing of $\mathbb M$. Then, applying Lemma \ref{lemtool} with $\varepsilon = U_n(u)$, we obtain almost surely:
\begin{equation}\label{H1}
d_H(\bbx_n, \mathbb M) 
\le 2\, U_n(u) 
+ \max_{1 \le j \le P_n} \min_{1 \le i \le n} \| X_i - c_j \|.
\end{equation}

Consequently,
\[
\mathbb{P}\left( \max_{1 \le j \le P_n} \min_{1 \le i \le n} \| X_i - c_j \| \le U_n(u) \right)
\le
\mathbb{P}\left( d_H(\bbx_n, \mathbb M) \le 3\, U_n(u) \right),
\]
and,
$$
\exp(-\exp(-u))\leq \liminf_{n\rightarrow \infty}\mathbb{P}\left( d_H(\bbx_n, \mathbb M) \le 3\, U_n(u) \right).
$$
The proof is complete thanks to Corollary \ref{main2}.

\subsection{Proof of Proposition \ref{theopacking}}\label{proofPro}

Let $c_1,\dots,c_P$ be an $\varepsilon$-packing of $\mathbb{M}$ (not necessarily maximal). Arguing as in (\ref{P1}), we obtain 
\[
P \min_{1\le j\le P} \mathbb{P}\bigl(X \in B(c_j,\varepsilon)\bigr)
\le 
\sum_{j=1}^{P} \mathbb{P}\bigl(X \in B(c_j,\varepsilon)\bigr)
=
\mathbb{P}\Bigl(X \in \bigcup_{j=1}^{P} B(c_j,\varepsilon)\Bigr)
\le 1.
\]
Therefore,
\[
P \le 
\frac{1}{\min_{1\le j\le P} 
\mathbb{P}\bigl(X \in B(c_j,\varepsilon)\bigr)}.
\]
In particular,
\[
P(\mathbb{M},\varepsilon)
\le 
\frac{1}{\min_{1\le j\le P(\mathbb{M},\varepsilon)} 
\mathbb{P}\bigl(X \in B(c_j,\varepsilon)\bigr)}.
\]

Now let $P=P(\mathbb{M},\varepsilon)$ and let $c_1,\dots,c_P$ be a maximal $\varepsilon$-packing. 
Then, arguing as in (\ref{P2}), we obtain 
\[
1
\le 
P(\mathbb{M},\varepsilon)
\max_{1\le j\le P}
\mathbb{P}\bigl(X \in B(c_j,2\varepsilon)\bigr).
\]
Thus,
\[
\frac{1}{\max_{1\le j\le P(\mathbb{M},\varepsilon)}
\mathbb{P}\bigl(X \in B(c_j,2\varepsilon)\bigr)}
\le 
P(\mathbb{M},\varepsilon)
\le
\frac{1}{\min_{1\le j\le P(\mathbb{M},\varepsilon)}
\mathbb{P}\bigl(X \in B(c_j,\varepsilon)\bigr)}.
\]
\\
Taking logarithms and dividing by $-\ln\varepsilon$, we obtain, letting
\[
\overline{\Lambda}(\varepsilon)
=
\frac{
\ln\!\left(
\min_{1\le j\le P(\mathbb{M},\varepsilon)}
\mathbb{P}(X \in B(c_j,\varepsilon))
\right)
}{\ln\varepsilon},
\qquad
\underline{\Lambda}(\varepsilon)
=
\frac{
\ln\!\left(
\max_{1\le j\le P(\mathbb{M},\varepsilon)}
\mathbb{P}(X \in B(c_j,\varepsilon))
\right)
}{\ln\varepsilon}.
\]
\\
\begin{equation}\label{E1}
\frac{\ln(2\varepsilon)}{\ln(\varepsilon)}\,
\underline{\Lambda}(2\varepsilon)
\le
\frac{\ln P(\mathbb{M},\varepsilon)}{-\ln\varepsilon}
\le \overline{\Lambda}(\varepsilon).
\end{equation}
\\

We shall use the following technical lemma.

\begin{lem}\label{proPackingNumber}
Assume that
\begin{equation}\label{E2}
\overline{\Lambda}(\varepsilon)
=
A + \frac{B}{|\ln\varepsilon|}
+ o\!\left(\frac{1}{|\ln\varepsilon|}\right),
\qquad
\underline{\Lambda}(\varepsilon)
=
A + \frac{B}{|\ln\varepsilon|}
+ o\!\left(\frac{1}{|\ln\varepsilon|}\right).
\end{equation}
Then
\[
\frac{\ln P(\mathbb{M},\varepsilon)}
{\ln\bigl(\varepsilon^{-A} e^B\bigr) + o(1)}
\longrightarrow 1
\quad \text{as } \varepsilon \to 0.
\]
\end{lem}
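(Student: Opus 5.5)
The plan is to start from the sandwich inequality (\ref{E1}), multiply through by $-\ln\varepsilon = |\ln\varepsilon|$ (positive for small $\varepsilon$), and insert the expansions (\ref{E2}) into both sides. The target is to show that $\ln P(\mathbb{M},\varepsilon) = A|\ln\varepsilon| + B + o(1)$. This is exactly $\ln(\varepsilon^{-A}e^B) + o(1)$, and since the denominator tends to $+\infty$ when $A>0$, the stated ratio then tends to $1$.

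For the upper bound, (\ref{E1}) gives
\[
\ln P(\mathbb{M},\varepsilon) \le |\ln\varepsilon|\,\overline{\Lambda}(\varepsilon) = A|\ln\varepsilon| + B + o(1),
\]
directly from (\ref{E2}).

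For the lower bound, (\ref{E1}) gives
\[
\ln P(\mathbb{M},\varepsilon) \ge |\ln\varepsilon|\,\frac{\ln(2\varepsilon)}{\ln\varepsilon}\,\underline{\Lambda}(2\varepsilon) = |\ln(2\varepsilon)|\,\underline{\Lambda}(2\varepsilon),
\]
valid once $2\varepsilon<1$. Applying (\ref{E2}) at $2\varepsilon$ yields
\[
A|\ln(2\varepsilon)| + B + o(1) = A|\ln\varepsilon| - A\ln 2 + B + o(1).
\]

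So the lower bound loses an additive constant $A\ln 2$, and this is the main obstacle. Strictly speaking one only gets $\ln P = A|\ln\varepsilon| + O(1)$, and the two-sided bounds differ by $A\ln 2$. Two remedies are available.

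The first remedy fixes the argument. Here I would note that the denominator $\ln(\varepsilon^{-A}e^B)+o(1)$ tends to infinity, so any $O(1)$ discrepancy in the numerator still gives a ratio tending to $1$. Concretely,
\[
\frac{A|\ln\varepsilon| - A\ln 2 + B + o(1)}{A|\ln\varepsilon| + B + o(1)} \to 1
\qquad\text{and}\qquad
\frac{A|\ln\varepsilon| + B + o(1)}{A|\ln\varepsilon| + B + o(1)} = 1 + o(1).
\]
Squeezing then gives the conclusion, so the statement as written (a ratio tending to $1$) follows without any sharper information. This requires $A>0$, which holds in the application, where $A=p=b$.

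The second remedy is to interpret the statement in the stronger additive sense, with $o(1)$ absorbing the error. That form would need the factor $2$ removed in (\ref{P2}), and I would not attempt it; the ratio form suffices for Assumption~$(\mathcal P)$ as it is phrased.

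Finally, I would check that dividing by $\ln\varepsilon$ and the term $\ln(2\varepsilon)/\ln\varepsilon \to 1$ introduce no sign issues for small $\varepsilon$. With that, the proof is complete.
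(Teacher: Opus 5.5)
Your proof is correct and follows the paper's route: both insert (\ref{E2}) into the sandwich (\ref{E1}) and conclude because the two bounds agree to leading order $A|\ln\varepsilon|\to\infty$; you track the bounds additively, which makes explicit the $A\ln 2$ gap that the paper's ratio argument absorbs without comment. Your side remark is also accurate: only the ratio form follows, not $\ln P(\mathbb{M},\varepsilon)=\ln(\varepsilon^{-A}e^{B})+o(1)$, and this matters because the proof of Corollary~\ref{main2} (around (\ref{bound4})) actually needs $P_n\sim c(\mathbb{M})\,U_n^{-b}(u)$, which is that stronger additive form.
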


\paragraph{Proof of Lemma \ref{proPackingNumber}.}

Combining \eqref{E1} with the assumed expansions (\ref{E2}) yields
\[
\frac{\ln(2\varepsilon)}{\ln(\varepsilon)}
\left(
A + \frac{B}{|\ln(2\varepsilon)|}
+ o\!\left(\frac{1}{|\ln(2\varepsilon)|}\right)
\right)
\le
-
\frac{\ln P(\mathbb{M},\varepsilon)}{\ln\varepsilon}
\le
A + \frac{B}{|\ln\varepsilon|}
+ o\!\left(\frac{1}{|\ln\varepsilon|}\right).
\]

Since, as $\varepsilon \to 0$
\[
\frac{\ln(2\varepsilon)}{\ln\varepsilon} \to 1
\quad\text{and}\quad
\frac{
A + \frac{B}{|\ln(2\varepsilon)|}
+ o\left(\frac{1}{|\ln (2\varepsilon)|}\right)
}{
A + \frac{B}{|\ln\varepsilon|}
+ o\left(\frac{1}{|\ln\varepsilon|}\right)
}
\to 1,
\]
we obtain, as $\varepsilon \to 0$
\[
-\frac{\ln P(\mathbb{M},\varepsilon)}{\ln\varepsilon}
\left(
A + \frac{B}{|\ln\varepsilon|}
+ o\!\left(\frac{1}{|\ln\varepsilon|}\right)\right)^{-1} \to 1.
\]
The last limit together with the equality,
\[
\ln\!\left(\frac{1}{\varepsilon}\right)
\left(
A + \frac{B}{|\ln\varepsilon|}
+ o\!\left(\frac{1}{|\ln\varepsilon|}\right)
\right)
=
\ln\!\left(\varepsilon^{-A} e^B\right)
+ o(1),
\]
gives, as $\varepsilon \to 0$, 
\[
\frac{\ln P(\mathbb{M},\varepsilon)}{
\ln\!\left(\varepsilon^{-A} e^B\right)
+ o(1)} \to 1,
\]
which proves the lemma.
\hfill $\square$

\medskip

We now return to the proof of Proposition \ref{theopacking}. 
It remains to verify the expansions above.

Let $X$ be uniformly distributed on $\mathbb{M}$, then
\[
\mathbb{P}\bigl(X \in \mathbb{M}\cap B(x,\varepsilon)\bigr)
=
\frac{
\mathrm{Vol}\bigl(\mathbb{M}\cap B(x,\varepsilon)\bigr)
}{
\mathrm{Vol}(\mathbb{M})
}.
\]

By Theorem \ref{theoK},
\[
\mathbb{P}\bigl(X \in \mathbb{M}\cap B(x,\varepsilon)\bigr)
=
\frac{\sigma_{p-1}}{p\,\mathrm{Vol}(\mathbb{M})}
\varepsilon^p
\left(
1 + C_x \varepsilon^2 + O(\varepsilon^3)
\right),
\]
where
\[
C_x
=
\frac{1}{8(p+2)}
\left(
2\|B_x\|^2 - \|H_x\|^2
\right).
\]

Taking logarithms,
\[
\ln \mathbb{P}\bigl(X \in \mathbb{M}\cap B(x,\varepsilon)\bigr)
=
p\ln\varepsilon
+
\ln\!\left(
\frac{\sigma_{p-1}}{p\,\mathrm{Vol}(\mathbb{M})}
\right)
+
O(\varepsilon^2).
\]

The remainder is uniform in $x$ because
$|2\|B_x\|^2 - \|H_x\|^2|$ is uniformly bounded by assumption. Hence,
for every $x\in\mathbb{M}$,
\[
\frac{
\ln \mathbb{P}\bigl(X \in \mathbb{M}\cap B(x,\varepsilon)\bigr)
}{
\ln\varepsilon
}
=
p
-
\frac{
\ln\!\left(
\frac{\sigma_{p-1}}{p\,\mathrm{Vol}(\mathbb{M})}
\right)
}{|\ln\varepsilon|}
+
o\!\left(\frac{1}{|\ln\varepsilon|}\right).
\]

Applying this to the points realizing the minimum and maximum in the definition of $\overline{\Lambda}(\varepsilon)$ and $\underline{\Lambda}(\varepsilon)$, Lemma \ref{proPackingNumber} applies with
\[
A=p,
\qquad
B=
\ln\!\left(
\frac{p\,\mathrm{Vol}(\mathbb{M})}{\sigma_{p-1}}
\right).
\]

The conclusion of the lemma completes the proof of Proposition \ref{theopacking}.
\hfill $\square$
\\
\\
{\bf{Acknowledgements.}}
We are grateful to Eddie Aamari for bringing reference~\cite{Eddie} to our attention. 
We also thank Boris Thibert for stimulating discussions regarding the computation of volumes of extrinsic Euclidean balls. 
{\textcolor{black}{We further thank Mathew D. Penrose for drawing our attention to Proposition 2.11 in \cite{Penrose2025b}, which is closely related to our Corollary 3.5. }}
Finally, we would like to express our sincere thanks to the Editor, the Associate Editor, and the two reviewers for their insightful and constructive comments, which have substantially improved the paper.

\section*{Author Contributions}
The author  contributed to conducting the research and writing the manuscript.

\section*{Funding}
This research received no specific grant from any funding agency in the public, commercial, or not-for-profit sectors.

\section*{Data Availability}
No datasets were generated or analysed during the current study.

\section*{Declarations}

\subsection*{Competing Interests}
The author declares no competing interests.
\subsection*{Ethics Approval}
Not applicable.

\end{document}